\theoremstyle{definition}
\newtheorem{theorem}{Theorem}
\newtheorem{sectional}{Theorem}[section]
\newtheorem{prop}[sectional]{Proposition}
\newtheorem{lemma}[sectional]{Lemma}
\newtheorem{conjecture}[]{Conjecture}
\newtheorem{defn}[sectional]{Definition}
\newtheorem{notation}[sectional]{Notation}
\newtheorem{warning}[sectional]{Warning}
\newtheorem{example}[sectional]{Example}
\newtheorem{construction}[sectional]{Construction}
\newtheorem{remark}[sectional]{Remark}
\newtheorem{image}[sectional]{Figure} 
\newcommand{\nc}{\newcommand}
\nc{\DMO}{\DeclareMathOperator}	
\nc{\newnotation}{\nomenclature}
\nc{\wrap}{\cW}
\nc{\Cob}{\mathsf{Cob}}
\nc{\mul}{\mathsf{Mul}}
\nc{\fat}{\mathsf{fat}}
\nc{\cob}{\mathsf{Cob}}
\nc{\coh}{\mathsf{Coh}}
\nc{\core}{\mathsf{core}}
\nc{\idem}{\mathsf{Idem}}
\nc{\sets}{\mathsf{Sets}}
\nc{\near}{\mathsf{near}}
\nc{\sing}{\mathsf{Sing}}
\nc{\symp}{\mathsf{Symp}}
\nc{\perf}{\mathsf{Perf}}
\nc{\ssets}{\mathsf{sSets}}
\nc{\cmpct}{\mathsf{cmpct}}
\nc{\finite}{\mathsf{Finite}}
\nc{\compact}{\mathsf{cmpct}}
\nc{\pwrap}{\mathsf{PWrap}}
\nc{\coder}{\mathsf{Coder}}
\nc{\bimod}{\mathsf{Bimod}}
\nc{\grmod}{\mathsf{GrMod}}
\nc{\spaces}{\mathsf{Spaces}}
\nc{\pwrms}{\mathsf{PWrFuk}_{M,S}}
\nc{\pwrmf}{\mathsf{PWrFuk}_{M,F}}
\nc{\pwrapmf}{\mathsf{PWrFuk}_{M,F}}
\nc{\fuk}{\mathsf{Fukaya}}
\nc{\infwr}{\mathsf{InfWr}}
\nc{\fukaya}{\mathsf{Fukaya}}
\nc{\autml}{\mathsf{Aut}_{M,\Lambda}}
\nc{\fukml}{\mathsf{Fukaya}_{M,\Lambda}}
\nc{\fukmle}{\mathsf{Fukaya}_{M,\Lambda,\epsilon}}
\nc{\fukmod}{\wrfukcompact(M)\modules}
\nc{\lag}{\mathsf{Lag}}
\nc{\Lag}{\mathsf{Lag}}
\nc{\lagm}{\lag_M}
\nc{\lago}{\lag^o}
\nc{\lagml}{\lag_{M,\Lambda}} 
\nc{\lagmle}{\lag_{M,\Lambda,\epsilon}}
\nc{\fun}{\mathsf{Fun}}
\nc{\vect}{\mathsf{Vect}}
\nc{\chain}{\mathsf{Chain}}
\nc{\wrfuk}{\mathsf{WrFukaya}}
\nc{\wrfukcompact}{\mathsf{WrFukaya}_{\mathsf{cmpct}}}
\nc{\pwrfuk}{\mathsf{PWrFukaya}}
\nc{\inffuk}{\mathsf{InfFuk}}
\nc{\pwrfukml}{\mathsf{PWrFukaya}_{M,\Lambda}}
\nc{\inffukml}{\mathsf{InfFuk}_{M,\Lambda}}
\nc{\nattrans}{\mathsf{NatTrans}}
\nc{\corres}{\mathsf{Corres}}
\nc{\fukep}{\fukaya_\Lambda(M,\epsilon)}
\nc{\fukepop}{\fukaya_\Lambda(M,\epsilon)^{\op}}
\nc{\lagep}{\lag_\Lambda(M,\epsilon)}
\DMO{\cyl}{cyl} 
\nc{\dbcoh}{D^b\mathsf{Coh}}
\nc{\corr}{\mathsf{Corr}}
\nc{\cat}{\mathsf{Cat}}
\nc{\Cat}{\mathsf{Cat}}
\nc{\ainfty}{\mathsf{A}_\infty}
\nc{\inftycat}{\mathcal{C}\!\operatorname{at}_\infty}
\nc{\Ainftycat}{\mathcal{C}\!\operatorname{at}_{A_\infty}}
\nc{\ainftycat}{\mathcal{C}\!\operatorname{at}_{A_\infty}}
\nc{\stablecat}{\mathcal{C}\!\operatorname{at}_\infty^{\Ex}}
\DMO{\im}{im}
\DMO{\ev}{ev}
\DMO{\inj}{inj}
\DMO{\fib}{fib}
\DMO{\conf}{Conf}
\DMO{\chains}{Chains}
\DMO{\cochains}{Cochains}
\DMO{\cone}{Cone}
\DMO{\ran}{Ran}
\DMO{\rot}{Rot}
\DMO{\leg}{Leg}
\DMO{\imm}{imm}
\DMO{\adj}{adj}
\DMO{\Crit}{Crit}
\DMO{\tree}{Tree}
\DMO{\cube}{Cube}
\DMO{\deep}{deep}
\DMO{\back}{back}
\DMO{\front}{front}
\DMO{\flow}{Flow}
\DMO{\floer}{Floer}
\DMO{\maps}{Maps}
\DMO{\exact}{exact}
\DMO{\excess}{Excess}
\DMO{\Decomp}{Decomp}
\DMO{\decomp}{Decomp}
\DMO{\collar}{collar}
\DMO{\yoneda}{Yoneda}
\DMO{\hamspace}{Ham}
\DMO{\sympspace}{Symp}
\DMO{\holomaps}{Holomaps}
\DMO{\comp}{Comp}
\DMO{\crit}{Crit}
\DMO{\test}{{test}}
\DMO{\sign}{sign}
\DMO{\topp}{top}
\DMO{\indx}{Index}
\DMO{\Break}{Break} 
\DMO{\zero}{zero} 
\DMO{\ob}{Ob}
\DMO{\gr}{Gr} 
\DMO{\Gr}{Gr} 
\DMO{\cl}{Cl} 
\DMO{\grlag}{GrLag}
\DMO{\GrLag}{GrLag}
\DMO{\Pin}{Pin}
\DMO{\Graph}{Graph}
\DMO{\grph}{Graph}
\DMO{\pin}{Pin}
\DMO{\gap}{Gap}
\DMO{\Ex}{Ex}
\DMO{\id}{id}
\DMO{\End}{End}
\DMO{\sym}{Sym} 
\DMO{\aut}{Aut}
\DMO{\DK}{DK} 
\DMO{\poly}{poly} 
\DMO{\diff}{Diff}
\DMO{\coll}{coll}
\DMO{\dist}{dist} 
\DMO{\coker}{coker} 
\nc{\kernel}{\ker} 
\DMO{\sspan}{span}
\DMO{\hocolim}{hocolim}	
\DMO{\holim}{holim}
\DMO{\sk}{sk}
\DMO{\ho}{ho}
\DMO{\fin}{fin}
\DMO{\tor}{Tor}
\DMO{\ext}{Ext}
\DMO{\ret}{Ret}
\DMO{\ham}{Ham}
\DMO{\con}{con}
\DMO{\leaf}{leaf}
\DMO{\supp}{supp}
\DMO{\edge}{edge}
\DMO{\colim}{colim}
\DMO{\edges}{edges}
\DMO{\Image}{image}
\DMO{\roots}{roots}
\DMO{\height}{height}
\DMO{\finmod}{FinMod}
\DMO{\leaves}{leaves}
\DMO{\planar}{planar}
\DMO{\vertices}{vertices}
\nc{\lagg}{\lag^{\cG}}
\nc{\iso}{\mathsf{Iso}}
\nc{\Set}{\mathsf{Set}}
\nc{\ass}{\mathsf{ \bf Ass}}
\nc{\Mod}{\mathsf{Mod}}
\nc{\modules}{\mathsf{Mod}}
\nc{\sset}{\mathsf{sSet}}
\nc{\liou}{\mathsf{Liou}}
\nc{\poset}{\mathsf{Poset}}
\nc{\trno}{T^*\RR^n_{\geq 0}}
\nc{\spectra}{\mathsf{Spectra}}
\nc{\tensorfin}{\tensor^{\fin}}
\nc{\lagptg}{\lag_{pt,pt}^{\cG}}
\nc{\Fin}{\mathcal{F}\mathsf{in}}
\nc{\lagnl}{\lag_{N,\Lambda}}
\nc{\lagmlg}{\lag_{M,\Lambda}^{\cG}}
\nc{\lagsplit}{\lag^{\mathsf{split}}}
\nc{\lagktimes}{(\lag^{\dd k})^\times}
\nc{\lagplanar}{\lag^{\times,\planar}}
\nc{\smsh}{\wedge}
\nc{\un}{\underline}
\nc{\xto}{\xrightarrow}
\nc{\xra}{\xto}
\nc{\tensor}{\otimes}
\nc{\del}{\partial}
\nc{\dd}{\diamond}
\nc{\tri}{\triangle}
\nc{\bb}{\Box}
\nc{\into}{\hookrightarrow}
\nc{\onto}{\twoheadrightarrow}
\nc{\contains}{\supset}
\nc{\transverse}{\pitchfork}
\nc{\uncirc}{\underline{\circ}}
\nc{\Jbar}{\overline{J}}
\nc{\Fbar}{\overline{F}}
\nc{\delbar}{\overline{\del}}
\nc{\thetabar}{\overline{\theta}}
\nc{\omegabar}{\overline{\omega}}
\nc{\colldiff}{\diff^{\del}} 
\nc{\trbar}{\overline{T^*\RR}}
\nc{\tr}{T^*\RR}
\nc{\tsa}{Ts\cA}
\nc{\tsb}{Ts\cB}
\nc{\cmbar}{\overline{\cM}}
\nc{\crbar}{\overline{\cR}}
\nc{\fcrit}{\ff^{crit}}
\nc{\fsubcrit}{\ff^{subcrit}}
\nc{\vece}{ {\vec \epsilon}}	
\nc{\vecd}{ {\vec \delta}}
\nc{\ov}{\overline}
\DMO{\op}{op}
\nc{\opp}{ ^{\op}}
\nc{\hiro}{\textcolor{blue}}
\nc{\eqn}{\begin{equation}}
\nc{\eqnn}{\begin{equation}\nonumber}
\nc{\eqnd}{\end{equation}}
\nc{\enum}{\begin{enumerate}}
\nc{\enumd}{\end{enumerate}}
\def\cA{\mathcal A}\def\cB{\mathcal B}\def\cC{\mathcal C}
\def\cG{\mathcal G}
\def\cL{\mathcal L}
\def\cM{\mathcal M}
\def\cR{\mathcal R}
\def\cW{\mathcal W}
\def\CC{\mathbb C}
\def\LL{\mathbb L}
\def\RR{\mathbb R}\def\SS{\mathbb S}
\def\XX{\mathbb X}
\def\ZZ{\mathbb Z}
\def\ff{\mathfrak f}
\def\fg{\mathfrak g}
	\title{Generation for Lagrangian cobordisms in Weinstein manifolds}
	\author{Hiro Lee Tanaka}
\begin{document}

\begin{abstract}
We prove that Lagrangian cocores and Lagrangian linking disks of a stopped Weinstein manifold generate the Lagrangian cobordism $\infty$-category. 

As a geometric consequence, we see that any brane (after stabilization) admits a Lagrangian cobordism to a disjoint union of some standard collection of branes (cocores, linking disks, and a zero object). For example, when our stopped Weinstein manifold is a point stopped by itself, we find that any exact brane in Euclidean space admits a Lagrangian cobordism to a disjoint union of cotangent fibers and a zero object. (This is a stronger statement than one could obtain from purely Fukaya-categorical generation results.) Our methods are constructive. For example, when our Weinstein manifold is a point, after stabilization we can resolve the conormal to a compact manifold $A \subset \RR^n$ by a sequence of cotangent fibers; the resulting filtration realizes, after passage to the wrapped Fukaya category, the Morse cochain complex of $A$ associated to (and hence filtered by) a generic ``distance to a point'' function; the associated gradeds are the reduced homologies of the Morse attaching spheres.

There is also an algebraic consequence. Lagrangian cobordism theory is conjectured (in analogue to classical cobordism theory) to be linear over a ring spectrum $\cL$ controlling Lagrangian cobordisms between cotangent fibers in Euclidean spaces. Our main theorem gives strong evidence for this conjecture: The $\infty$-category of Lagrangians and their cobordisms in $\RR^\infty$ is equivalent to a full subcategory of modules over $\cL$.

We conclude by proving a $\pi_0$-level theorem that gives further evidence of the above conjecture: We exhibit a $\pi_0$-level symmetric monoidal structure compatible with the linear structure of $\cL$-modules.
\end{abstract}

	\maketitle

\section{Introduction}
Let $M$ be Liouville manifold---that is, an exact symplectic manifold whose Liouville flow induces a conical end outside some compact set. Fix also some subset $\Lambda \subset M$.

To this data, one can associate an $\infty$-category $\lag_\Lambda(M)$ whose objects are certain exact Lagrangian branes in $M \times T^*\RR^N$ for $N \geq 0$, and whose morphisms are Lagrangian cobordisms satisfying a non-characteristic condition with respect to $\Lambda$. (See Definition~\ref{defn. non-characteristic cobordism}.)

In this paper, we consider the case when $M$ is Weinstein and $\Lambda$ arises by choosing stops.

\begin{theorem}\label{thm:main-vague}
The collection of Lagrangian cocores and linking disks generate $\lag_\Lambda(M)$ as a stable $\infty$-category.
\end{theorem}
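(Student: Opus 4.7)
The overall strategy is to reduce to the Ganatra--Pardon--Shende generation theorem, which asserts that cocores and linking disks generate the wrapped Fukaya category $\mathcal{W}(M,\Lambda)$ as a pretriangulated $\infty$-category, and then to transport this algebraic generation back to $\lag_\Lambda(M)$ via a Yoneda-type functor.

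First, I would construct (or invoke from earlier foundational work) an exact functor $\Phi$ from $\lag_\Lambda(M)$ to right modules over $\mathcal{W}(M,\Lambda)$, sending each brane $L$ to its Yoneda module. The basic geometric input, going back to Biran--Cornea and realized at the $\infty$-categorical level in earlier work, is that a non-characteristic cobordism from $L$ to $L_0 \sqcup L_1$ exhibits $\Phi(L)$ as a cone on a morphism between $\Phi(L_0)$ and $\Phi(L_1)$. Stabilization in the $T^*\RR$ direction corresponds to shift in the module category, so $\Phi$ is automatically exact.

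The technical heart---and expected main obstacle---is full faithfulness of $\Phi$. Concretely, one must identify the mapping spectrum $\maps_{\lag_\Lambda(M)}(L_0, L_1)$, computed from moduli of non-characteristic cobordisms, with the Floer-theoretic mapping spectrum of the associated modules. This requires the standard pseudo-holomorphic package (compactness, transversality, gluing) together with a careful verification that geometric composition of cobordisms matches algebraic composition in the module category; this is where I expect the bulk of the technical effort to lie.

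With $\Phi$ fully faithful and exact, the proof concludes formally. For any brane $L$, GPS places $\Phi(L)$ in the thick subcategory of $\mathcal{W}(M,\Lambda)$-modules generated by $\Phi$ applied to the cocores and linking disks. Full faithfulness then lifts this algebraic decomposition back to $\lag_\Lambda(M)$, placing $L$ itself in the stable subcategory generated by cocores and linking disks, which is Theorem~\ref{thm:main-vague}. Geometrically, the lift is realized by iterating the cone cobordisms supplied by the functorial setup, yielding a single Lagrangian cobordism from $L$ to a disjoint union of cocores and linking disks---the geometric consequence stated in the abstract.
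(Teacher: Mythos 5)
Your transport strategy is formally sound as pure category theory: the essential image of a fully faithful exact functor between stable $\infty$-categories is closed under shifts and cones, so generation downstairs would indeed lift. The fatal problem is the step you flag as the ``technical heart.'' Full faithfulness of a Yoneda-type functor $\Phi$ from $\lag_\Lambda(M)$ to modules over the wrapped Fukaya category is not a verification awaiting the standard holomorphic-curve package---it is essentially an open problem, and one this paper deliberately routes around. What exists in the literature the paper builds on (Remark~\ref{remark:lag-detects-fuk}) is an exact functor $\Xi$ to modules over the \emph{compact} Fukaya category, with no faithfulness statement of any kind; no functor to wrapped modules is constructed at all. Worse, full faithfulness is in tension with the paper's basic premise: mapping complexes in $\cW$-modules are $H\ZZ$-modules, so a fully faithful $\Phi$ would force every mapping spectrum of $\lag_\Lambda(M)$---in particular $\cL^\cB = \End(pt)$---to be an $H\ZZ$-module, collapsing exactly the spectral enrichment that motivates the theory; Conjecture~\ref{conjecture:main} predicts $\cW_\Lambda(M)$ is the base change $\lag^{\cB}_\Lambda(M) \tensor_{\cL^\cB} H\ZZ$, not $\lag$ itself. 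The paper states explicitly that Floer-theoretic techniques ``fall short of proving structural statements of this sort'' about $\lag$, because the relation between holomorphic-disk counts and the topology of spaces of cobordisms is unknown; that is precisely why both the statement and the proof of Theorem~\ref{thm:main-vague} avoid Fukaya categories entirely. (A smaller point: since $\lag_\Lambda(M)$ is not known to be idempotent complete, your transport would also need honest generation, not split-generation, downstairs---GPS do provide this for Weinstein manifolds, but it is worth noting.)

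The paper's actual argument runs in the opposite direction to yours: it is purely geometric, and its Fukaya-categorical consequences follow from it, not conversely. One stabilizes $X$ to $X^\dd = X \times T^*_0E$, then runs a generic, eventually linear, positive Hamiltonian isotopy pushing $X^\dd$ into the region where the stabilizing cotangent coordinate is negative, where it becomes a zero object by Proposition~\ref{prop. zero objects}. Genericity arranges that the isotopy crosses the linking disks of $M \times Y$ transversally, one intersection point at a time; each crossing is resolved by a Polterovich surgery, which yields a fiber sequence by Theorem~\ref{theorem. surgery cone}, and Lemma~\ref{lemma. surgery by linking disk} identifies the surgered object with the further-flowed object. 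The key new ingredient is Lemma~\ref{lemma.attachment}---an explicit vertically bounded cobordism showing that exact handle attachment at infinity does not change the object---which substitutes for the Viterbo restriction functor used in \cite{gps2} and unavailable here. Proposition~\ref{prop. generation and cones} then concludes. This argument is constructive, which is what yields the geometric Theorem~\ref{thm:geometric-version}; your approach, even granting full faithfulness, would produce the decomposition only abstractly.
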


We refer the reader to Theorem~\ref{theorem. cocores} for a more precise statement. 

The above is analagous to generation results for partially wrapped Fukaya categories~\cite{CDGG, gps2} and is in fact inspired by ideas in~\cite{gps2}. However, Fukaya-categorical generation results sometimes leave us without the ability to {\em geometrically} construct or realize an arbitrary brane as ``resolved'' by the generating objects.
As it turns out, because we work in the setting of cobordisms, the proof method yields the following geometric result:

\begin{theorem}\label{thm:geometric-version}
Fix $X$ an exact brane inside $M$. Then there exists an exact Lagrangian cobordism whose ends are given by a stabilization of $X$, a zero object, and stabilizations of Lagrangian cocores and linking disks.
\end{theorem}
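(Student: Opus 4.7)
The plan is to derive Theorem~\ref{thm:geometric-version} as the direct geometric translation of Theorem~\ref{thm:main-vague}, exploiting the defining feature of $\lag_\Lambda(M)$: cones and fiber sequences in this $\infty$-category are witnessed by genuine Lagrangian cobordisms with cylindrical ends. Under this dictionary, an algebraic presentation of $X$ as an iterated cone of cocores and linking disks translates into a single Lagrangian cobordism whose ends are $X$ (stabilized), together with the generators.

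First I would apply Theorem~\ref{thm:main-vague}, in the precise form of Theorem~\ref{theorem. cocores}, to conclude that $X$ lies in the smallest stable subcategory of $\lag_\Lambda(M)$ containing the cocores $C_\alpha$ and the linking disks $D_\beta$. Unpacking the definition of stable generation, this yields a finite binary tree $T$ whose leaves are decorated by cocores and linking disks, whose internal vertices are decorated by cone constructions, and whose root represents an object equivalent in $\lag_\Lambda(M)$ to $X$ after suitable stabilization and shift.

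Next I would realize each internal vertex of $T$ geometrically. A morphism $L_0 \to L_1$ in $\lag_\Lambda(M)$ is represented, after stabilization, by an exact Lagrangian cobordism in $M \times T^*\RR^{N+1}$ with cylindrical ends on $L_0$ and $L_1$; the cone on such a morphism comes equipped with a Lagrangian cobordism whose cylindrical ends are precisely $L_0$, $L_1$, $\cone(L_0 \to L_1)$, and a zero object, witnessing the exact triangle. Running this construction at every internal vertex of $T$ produces a family of local cobordism pieces, one per vertex.

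Finally, after one further stabilization so that all pieces live in a common $M \times T^*\RR^N$, I would concatenate them along the internal edges of $T$, matching cylindrical ends pairwise at each edge. The outcome is a single exact Lagrangian cobordism whose remaining external ends are exactly those indexed by the leaves and root of $T$: cocores, linking disks, zero objects arising from the triangles, and a stabilization of $X$. The main obstacle is this final gluing step. Each local cobordism piece is produced only up to ambient Hamiltonian isotopy and non-characteristic perturbation with respect to $\Lambda$, so one must arrange compatible collar neighborhoods, matching primitives of the Liouville form, and transverse intersection data with $\Lambda$ before concatenation. Once these compatibilities are enforced, the theorem follows by induction on the height of $T$.
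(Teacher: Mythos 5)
Your overall strategy---turn generation into geometry by realizing each cone as a multi-ended cobordism and gluing inductively---is the same as the paper's, but there is a genuine gap at exactly the step you defer to the end, and it is not the gap you name. The problem is that you invoke Theorem~\ref{theorem. cocores} as a black box. The filtration it produces (via Proposition~\ref{prop. generation and cones}) determines the intermediate objects $X_i$, and the identification of $X_i$ with the cone on $D_i[a_i] \to X_{i+1}$, only up to equivalence in $\lag_\Lambda(M)$. To concatenate your local pieces you must therefore splice in cobordisms realizing these equivalences, and a general equivalence in $\lag_\Lambda(M)$ is \emph{not} vertically bounded: for instance, the suspension cobordism of an eventually linear, non-compactly-supported wrapping Hamiltonian has $\tau$-coordinate in $T^*F$ equal to $H_t(\flow_t(x))$, hence unbounded over a compact $t$-interval. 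Splicing such a piece into your glued Lagrangian destroys precisely the conclusion being claimed: the result is still a collared cobordism with the right ends in the weak sense, but its projection to $T^*\RR_t$ is no longer, outside a compact set, a finite union of horizontal and upward vertical rays with product collars---which is what ``whose ends are given by'' means in Theorem~\ref{thm:geometric-version}, per the remark immediately following it. Matching collar neighborhoods, primitives, and transversality with $\Lambda$---the compatibilities you list---do not repair this; the issue is quantitative control on the vertical extent of the interpolating equivalences, not smoothness of the gluing.

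This is why the paper does not argue from the \emph{statement} of Theorem~\ref{theorem. cocores} but reruns its \emph{proof}: there the objects are honest flows $\flow_t(X^{\dd})$ and surgeries, each fiber sequence $D_i^{\alpha_i} \to X_i \to X_{i-1}$ is realized by a three-ended surgery cobordism $Q_i$ (two horizontal rays collared by $D_i^{\alpha_i}$ and $X_i$, one vertical ray collared by $X_{i-1}$), and---this is the key observation---every equivalence in the chain~\eqref{eqn:main-chain-of-equivalences} can be chosen either compactly supported (the isotopy identifying the surgery with the handle attachment, and the wrapping isotopy from Proposition 1.27 of~\cite{gps2}) or vertically bounded (the handle-attachment equivalence of Lemma~\ref{lemma.attachment}, whose proof was engineered to output a vertically bounded cobordism). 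Only because of this does the inductive gluing of the $Q_i$ along the $X_{i-1}$ ends preserve the finite-union-of-rays structure. To salvage your more abstract formulation you would need a lemma saying that every object of $\langle \{D^\alpha\}\rangle$ admits a presentation in which all structure morphisms and identifications are vertically bounded---which is essentially what the paper establishes by hand for the particular filtration coming from wrapping past linking disks.
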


\begin{example}
When $M=\Lambda=pt$, the result shows that for any exact brane $X \subset T^*\RR^N$ there exists an exact Lagrangian cobordism one of whose ends is (a possibly stabilized) copy of $X$, another is a zero object, while the other ends are cotangent fibers.
\end{example}

\begin{remark}
Let us be precise by what we mean by a Lagrangian cobordism in Theorem~\ref{thm:geometric-version}. We mean that there exists an integer $N \geq 0$ and an exact Lagrangian submanifold
	\eqnn
	W \subset M \times T^*\RR^N \times T^*\RR_t
	\eqnd
whose projection to $T^*\RR_t$ is equal---outside of some compact $K \subset T^*\RR_t$---to a finite union of horizontal rays (i.e., contained in the zero section) and of upward-pointing vertical rays (i.e., contained in some collection of cotangent fibers and in the upper-half plane of $T^*\RR_t$). Along the positive ray in the zero section, $W$ is equal to a direct product of this positive ray with $X \times T^*_0 \RR^N$, i.e., $W$ is collared by a stabilization of $X$.

On exactly one other ray, $W$ is equal to a direct product of the ray with a zero object of $\lag_\Lambda(M)$; in fact, this zero object may be taken to be a copy of $X$ times a curve in $T^*\RR_t$ avoiding the zero section of $T^*\RR_t$. 

Over all other rays, $W$ is equal to a direct product of the ray with a stabilization of a cocore or a linking disk.

We emphasize that under our rules of engagement (i.e., by our definition of morphism in $\lag_\Lambda(M)$), this cobordism is {\em not} in general equivalent to a cobordism collared by only two rays (both contained in the zero section); that is, it cannot be interpreted as a {\em morphism} with domain the disjoint union of cocores and disks, and codomain a stabilization of $X$. 
\end{remark}

\begin{remark}
The reader may be familiar with the fact that classical cobordism theory is often trivial when one considers non-compact cobordisms between non-compact manifolds. In contrast, our cobordisms have strong controls both on their Lagrangian structures (they must be exactly embedded) and their behavior at infinity (their primitives must vanish near infinity) so the existence of certain Lagrangian cobordisms is not a triviality. This is also reflected in the algebraic content of the Lagrangian cobordisms we produce, which we will explain shortly.
\end{remark}

\begin{remark}
Though the two above theorems have Floer-theoretical consequences, they are proved independently of any Floer-theoretical techniques. In particular, both the statements and proofs of the theorems require no mention of Fukaya categories.
\end{remark}

We have stated that $\lag_\Lambda(M)$ is a {\em stable} $\infty$-category~\cite{nadler-tanaka}. Informally, this means that $\lag_\Lambda(M)$ behaves like a pretriangulated dg-category, or pretriangulated $A_\infty$-category, but enriched over spectra (not chain complexes) in the sense of stable homotopy theory. For us, the import will be that $\lag_\Lambda(M)$ has a notion of mapping cone sequences.

We set some notation to make a precise statement. 

\begin{notation}
One can choose different brane structures to decorate Lagrangians, and each choice of brane convention results in a different $\infty$-category of Lagrangian cobordisms. In this work, we demand that every (Lagrangian) brane in sight is equipped with a primitive which vanishes near infinity. (In particular, every brane is conical at infinity.) When relevant, we denote a choice of brane convention by $\cB$ and we indicate the dependence by $\lag^\cB_\Lambda(M)$. A typical choice of $\cB$ is to demand that every Lagrangian in sight is equipped with a primitive vanishing at infinity, a relative Pin structure, and a grading.  When the choice of $\cB$ is immaterial, we will write $\lag_\Lambda(M)$.
\end{notation}

\begin{notation}\label{notation:core}
Also recall that if $Z$ is the Liouville vector field of $M$ (pointing outward along the conical end of $M$), one can define the {\em core}, or {\em skeleton} of $M$ to be the set
	\eqnn
	\core(M) := \bigcap_{t >0} \flow^Z_{-t}(M^o)
	\eqnd
where $M^o$ is the (compact) complement of a conical end. Equivalently, $\core(M)$ is the set of points that do not escape $M^o$ under the flow of $Z$. 
\end{notation}

Theorem~\ref{thm:main-vague} may be stated more precisely as follows:

\begin{theorem}\label{theorem. cocores}
Assume $Z$ is gradient-like for a Morse function. 
\enum
\item Let $\{D\}$ denote the collection of Lagrangian cocores, and let $\{D^\alpha\}$ denote the collection of Lagrangian cocores equipped with brane structures. Then for $\Lambda = \core(M)$, the collection $\{D^\alpha\}$ generates $\lag_\Lambda(M)$ as a stable $\infty$-category. 
\item More generally, choose a stop $\ff \subset \del_\infty M$. Let $\{D\}$ denote the collection of Lagrangian cocores, and of Lagrangian linking disks of the critical components of $\ff$. Then for $\Lambda = \core(M) \bigcup \cup_t \flow_t^Z(\ff)$, the collection $\{D^\alpha\}$ generates $\lag_\Lambda(M)$ as a stable $\infty$-category.
\enumd
\end{theorem}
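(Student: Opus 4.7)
Part (1) is the special case of Part (2) where $\ff$ is empty, so I focus on Part (2). The plan is to construct, for every brane $X \in \lag_\Lambda(M)$, an exact Lagrangian cobordism $W \subset M \times T^*\RR^N \times T^*\RR_t$ whose ends are a stabilization of $X$, a zero object, and stabilizations of Lagrangian cocores and linking disks. Since $\lag_\Lambda(M)$ is a stable $\infty$-category and stabilization is an equivalence there, any such $W$ encodes an iterated cone decomposition exhibiting $X$ as built from cocores and linking disks — which is exactly generation. (As a bonus, the existence of such a $W$ for every $X$ is Theorem~\ref{thm:geometric-version}.)

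To build $W$ I use the Morse function $\phi$ for which $Z$ is gradient-like. The skeleton $\Lambda = \core(M) \cup \cup_t \flow^Z_t(\ff)$ decomposes as the union of unstable manifolds of the critical points of $\phi$, glued onto the forward $Z$-flow of $\ff$. In a standard Weinstein-handle chart around a critical point $p$, the cocore $D_p$ is a cotangent fiber in a local cotangent bundle $T^*C_p$ over the stable manifold $C_p$; in a collar neighborhood of a critical component of the stop, the linking disk plays the analogous role. These two types of local models cover a neighborhood of $\Lambda$.

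The construction of $W$ proceeds in two stages. First, I use the (inverse) Liouville flow to push $X$ into an arbitrarily small neighborhood of $\Lambda$; because $Z$ is Hamiltonian away from $\core(M)$, this isotopy is realized by a graph-type Lagrangian cobordism in $M \times T^*\RR_t$, and the part of $X$ that escapes toward infinity under the flow is separated off as the \emph{zero end} of $W$ — concretely, a copy of $X$ in the upper half plane of $T^*\RR_t$, disjoint from the zero section, which represents a zero object in $\lag_\Lambda(M)$. Second, inside this neighborhood of $\Lambda$, $X$ is concentrated within the standard local models; in each local model, the classical resolution of an exact Lagrangian in $T^*C$ by cotangent fibers can be realized (after stabilization by $T^*\RR^N$) as a Lagrangian cobordism whose free ends are precisely cocores or linking disks. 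These local cobordisms are then assembled into a single global $W$ using a partition-of-unity argument in the $t$-direction.

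The main obstacle — and the reason stabilization by $T^*\RR^N$ is necessary — is the global assembly step. One must ensure simultaneously that $W$ remains exactly embedded with a primitive vanishing near infinity of $M \times T^*\RR^N \times T^*\RR_t$, that $W$ is non-characteristic with respect to $\Lambda$ throughout (so that no unaccounted-for ends are created as the cobordism approaches $\Lambda$), and that overlaps between the local resolutions can be patched by an exact Lagrangian surgery without introducing additional ends. Stabilization buys the transverse room to put all primitives and all overlaps in generic position, and the gradient-like hypothesis on $Z$ guarantees that the local models near critical points and near critical components of $\ff$ are exactly the standard cotangent and linking-disk models for which the local resolution step is classical.
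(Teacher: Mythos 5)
Your proposal has two genuine gaps, one logical and one geometric. The logical one: you reduce generation to the existence of a multi-ended cobordism $W$ and assert that ``any such $W$ encodes an iterated cone decomposition.'' That implication is not formal in $\lag_\Lambda(M)$: a cobordism with several vertical ends is just one morphism in this $\infty$-category, and extracting a tower of fiber sequences from it requires decomposing it into elementary pieces each of which is \emph{known} to induce a cone sequence --- which is exactly what Theorem~\ref{theorem. surgery cone} supplies for Polterovich surgeries, and what the paper arranges by making the wrapping generic. The paper's logic runs in the opposite direction: it proves generation by producing explicit fiber sequences, and Theorem~\ref{thm:geometric-version} falls out as a byproduct of that constructive proof. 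You cannot take the geometric statement as the starting point for free.

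The geometric gaps are in both stages of your construction of $W$. First, $Z$ is never Hamiltonian (it satisfies $L_Z\omega=\omega$), and the inverse Liouville flow fixes the conical end of $X$ setwise, so it neither pushes all of $X$ into a neighborhood of $\Lambda$ nor sends any part of $X$ ``off to infinity'' to be split off as a zero end; the paper instead manufactures the zero object by wrapping the stabilizing factor below the zero section and invoking Proposition~\ref{prop. zero objects}. Second, the ``classical resolution of an exact Lagrangian in $T^*C$ by cotangent fibers,'' realized as a Lagrangian \emph{cobordism}, does not exist in the literature: in the cobordism setting that statement is precisely the theorem being proven (even for $M=\Lambda=pt$ it is Theorem~\ref{theorem. cocores}), and Abouzaid-type Floer-theoretic generation of $T^*C$ by a fiber cannot be imported, since nothing is known to transport Fukaya-categorical resolutions to cobordisms. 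Third, exact embedded Lagrangians cannot be patched by a ``partition-of-unity argument'': the Lagrangian and embeddedness conditions are non-linear, and local-to-global assembly is exactly where Fukaya-categorical proofs need heavy machinery (descent/Viterbo restriction) that the paper explicitly notes is unavailable for $\lag_\Lambda(M)$ --- its workaround is the explicit vertically bounded cobordism of Lemma~\ref{lemma.attachment}. The actual proof avoids all three problems: stabilize $X$ to $X^\dd = X\times \RR_y$, wrap $\RR_y$ by a positive eventually linear Hamiltonian until it lies strictly below the zero section (so the result is a zero object), arrange by genericity that the wrapping crosses linking disks of $M\times Y$ transversally one point at a time, and convert each crossing into a fiber sequence $D_i^{\alpha_i}\to \flow_{t_i+\epsilon}(X^\dd)\to \flow_{t_i-\epsilon}(X^\dd)$ via Lemma~\ref{lemma. surgery by linking disk} and Theorem~\ref{theorem. surgery cone}, concluding by Proposition~\ref{prop. generation and cones}.
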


That is, given any object $X \in \lag_\Lambda(M)$,  $X$ can be written as an iterated extension of the $D^{\alpha}$. Put another way, one can find a finite sequence of objects and maps between them
	\eqnn
	0 \simeq X_n \to X_{n-1} \to \ldots \to X_0 \simeq X
	\eqnd
in $\lag_\Lambda(M)$ such that for all $i$, the mapping cone $X_i/X_{i-1}$ is equivalent to one of the $D^{\alpha}$; or, equivalently, the fiber of the map $X_i \to X_{i-1}$ is equivalent to (a shift of) one of the $D^\alpha$. See also Figure~\ref{figure:filtration-staircase}.

\begin{remark}
	Because $\lag_\Lambda(M)$ in general is not known to be idempotent-closed, the generation statement is stronger than split-generation. 
	\end{remark}

\begin{remark}\label{remark:resolution-by-cobordisms}
The proof of Theorem~\ref{theorem. cocores} is constructive.  Concretely, given any object $X$ of the wrapped category of $M$, our proof {exhibits} a Lagrangian cobordism from $X \times \RR^N$ (i.e., a stabilized version of $X$) to a disjoint union of multiple branes, each of which is either a Lagrangian cocore or a linking disk (possibly stabilized).
\end{remark}

The theory of stable $\infty$-categories is merely analogous to that of pretriangulated categories (and not equivalent). Informally, this is because a stable $\infty$-category is a priori linear not over a ring in the usual sense, but over a ring spectrum in the sense of stable homotopy theory. Accordingly, to study Lagrangian cobordisms, we would like to identify what this base ring spectrum {is}.
Theorem~\ref{theorem. cocores} enables us to make significant headway.

Recall that every stable $\infty$-category has a shift operation $L \mapsto L[1]$, where $L[1]$ is defined as the cofiber of the zero map $L \to 0$.  Given the point $M=\Lambda=pt$,  let $L = pt$ be the unique non-empty (exact) Lagrangian submanifold. We fix a choice of brane convention $\cB$ for which the set of brane structures on $L=pt$ is acted upon transitively by the shift operation. Then Theorem~\ref{theorem. cocores} shows that $\Lag^\cB_{pt}(pt)$ is generated by a single object: the point. 

\begin{notation}\label{notation:cL}
We let $\cL^{\cB} = \End(pt)$ denote the ring of endomorphisms. Note that because $\lag^{\cB}_{pt}(pt)$ is stable, we may consider $\cL^{\cB}$ as an $A_\infty$ ring spectrum.
\end{notation}

The following  is immediate from the definition of generation and the fact that $\lag$ is a stable $\infty$-category:

\begin{theorem}\label{theorem. point}
		$\Lag_{pt}^\cB(pt)$ is equivalent to a full subcategory of modules over $\cL^{\cB}$, consisting of those modules that can be presented as finite extensions of shifts of $\cL^{\cB}$.
\end{theorem}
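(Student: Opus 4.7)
The plan is to construct a Yoneda-style exact functor $F\colon \Lag^{\cB}_{pt}(pt) \to \Mod_{\cL^{\cB}}$ by sending an object $X$ to the mapping spectrum $\mathrm{Hom}_{\Lag}(pt, X)$, equipped with its natural right $\cL^{\cB}$-module structure coming from precomposition with $\End(pt) = \cL^{\cB}$. Because $\Lag^{\cB}_{pt}(pt)$ is stable, this mapping-spectrum construction is well-defined, and $F$ is exact (it sends cofiber sequences in $X$ to cofiber sequences of $\cL^{\cB}$-modules). Tautologically $F(pt) = \cL^{\cB}$ as a free module of rank one, and since both source and target are stable, $F$ commutes with shifts; in particular $F$ is fully faithful when restricted to the full subcategory spanned by shifts of $pt$.

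The next step is to upgrade this full faithfulness to all of $\Lag^{\cB}_{pt}(pt)$. For this I would invoke Theorem~\ref{theorem. cocores} in the case $M = \Lambda = pt$: every object $X$ admits a finite filtration
\eqnn
0 \simeq X_n \to X_{n-1} \to \cdots \to X_0 \simeq X
\eqnd
with successive cones equivalent to shifts of $pt$. Given any second object $Y$, one then proves by induction on the length of the filtration of $X$---and, symmetrically, that of $Y$---that the comparison map
\eqnn
\mathrm{Hom}_{\Lag}(X, Y) \longrightarrow \mathrm{Hom}_{\Mod_{\cL^{\cB}}}(F(X), F(Y))
\eqnd
is an equivalence of spectra. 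The inductive step is standard in stable $\infty$-category theory: a cofiber sequence $X' \to X \to X''$ induces fiber sequences on both sides of the comparison, and two-out-of-three for equivalences concludes the step.

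The identification of the essential image is then immediate from exactness of $F$ together with the same filtration: the image of the filtration of $X$ exhibits $F(X)$ as a finite iterated extension of shifts of $\cL^{\cB}$, while conversely any such iterated extension in $\Mod_{\cL^{\cB}}$ is realized by applying $F$ to the corresponding iterated cone of shifts of $pt$ inside $\Lag^{\cB}_{pt}(pt)$.

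The main conceptual obstacle is not the induction, which is formal, but the preliminary task of producing $\cL^{\cB}$ as an $A_\infty$-ring spectrum and $F$ as an exact functor of stable $\infty$-categories in the first place. This relies on the stability of $\Lag^{\cB}_{\Lambda}(M)$ established in \cite{nadler-tanaka}, together with the standard machine turning enriched mapping objects in a stable $\infty$-category into module spectra over an endomorphism ring spectrum; with these foundations in hand, the theorem reduces to the formal argument above, which is the sense in which it is immediate from Theorem~\ref{theorem. cocores}.
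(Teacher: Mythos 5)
Your proposal is correct and is essentially the paper's own argument: the paper states the theorem as immediate from Theorem~\ref{theorem. cocores} (generation by shifts of the point) together with stability, implicitly invoking exactly the standard mapping-spectrum/Morita machinery that you spell out. Your explicit construction of $X \mapsto \mathrm{Hom}(pt,X)$ as an exact functor to $\cL^{\cB}$-modules, the filtration induction for full faithfulness, and the identification of the essential image are precisely the details the paper leaves to the reader (note only that the induction on the filtration of $X$ alone suffices, since full faithfulness with source a shift of $pt$ holds against arbitrary targets).
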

		
\begin{remark}
By modules over $\cL^\cB$, we of course mean modules of spectra.
\end{remark}

The partially wrapped Fukaya categories of exact branes are all linear over the integers---these integers are concretely found as endomorphisms of a point in $M=\Lambda=pt$. The naive ``proof'' of this fact is in the definitions, as all wrapped complexes are defined $\ZZ$-linearly in the exact setting. In contrast, our definition of $\lag_\Lambda(M)$ is not constructed linearly over $\cL^\cB$, so one must find a geometric reason why arbitrary choices of $M$ and $\Lambda$ still yield $\cL^\cB$-linear $\infty$-categories.

To do so, let us recall the following fact from algebra: If one wants to exhibit an $R$-linear structure on a $\ZZ$-linear $A_\infty$-category $\cA$, it suffices to produce an action
	\eqnn
	R\Mod^{fg} \times \cA \to \cA
	\eqnd
preserving zero objects and mapping cone sequences in each variable. (Here, $R\Mod^{fg}$ is the dg-category of finitely generated complexes of $R$-modules.)

To that end, we prove a $\pi_0$ version of a statement we hope to enrich in later work:

\begin{theorem}\label{theorem. pt action}
For any $M$ and $\Lambda$, let $\ho\Lag^{\cB}_{\Lambda}(M)$ denote the homotopy category of $\Lag_{\Lambda}^{\cB}(M)$. Then $\ho\Lag^{\cB}_{pt}(pt)$ is a symmetric monoidal category, and the direct product of branes induces an action
	\eqnn
	\ho\Lag^{\cB}_{pt}(pt) \times
	\ho\Lag^{\cB}_{\Lambda}(M) \to
	\ho\Lag^{\cB}_{\Lambda}(M).
	\eqnd
Moreover, the action preserves the zero object and exact triangles in each variable.
\end{theorem}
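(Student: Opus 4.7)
The plan is to build the monoidal structure and the action by directly exploiting the symplectic product. Given branes $L \subset T^*\RR^{N_1}$ and $X \subset M \times T^*\RR^{N_2}$, one assigns $L \times X \subset M \times T^*\RR^{N_1+N_2}$, using the canonical symplectomorphism $T^*\RR^{N_1} \times T^*\RR^{N_2} \cong T^*\RR^{N_1+N_2}$. Primitives add to give a primitive on the product (still vanishing at infinity because each factor's does), and Pin/grading data transport via the standard external product. The same recipe applies to cobordisms: a cobordism $W_X$ in $M \times T^*\RR^{N_2} \times T^*\RR_t$ with collaring rays can be crossed with $L$ to produce a cobordism in $M \times T^*\RR^{N_1+N_2} \times T^*\RR_t$ whose rays are products of the original rays with $L$. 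I would first verify that the resulting object is a morphism in $\lag^\cB_\Lambda(M)$, which boils down to (i) preservation of conical behavior at infinity, and (ii) the non-characteristic condition with respect to $\Lambda$ (since $L \times \Lambda$ is the preimage of $\Lambda$ under the obvious projection, non-characteristicness is inherited).

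Second, I would pass to the homotopy category. Because composition of cobordisms in $\lag$ is defined by a concatenation/gluing up to cylindrical reparameterization, and this construction commutes with direct product on the nose, the assignment $(L,W_X) \mapsto L \times W_X$ is compatible with composition in each variable up to the usual cobordism concordance. Hence it descends to a bifunctor $\ho\Lag^\cB_{pt}(pt) \times \ho\Lag^\cB_\Lambda(M) \to \ho\Lag^\cB_\Lambda(M)$. The symmetric monoidal structure on $\ho\Lag^\cB_{pt}(pt)$ follows by the same product construction applied with $M = pt$: associators, unitors (using the unit brane $pt \in T^*\RR^0$, i.e.\ the empty stabilization), and the symmetry (swapping the two $\RR^N$ factors, which is realized by a compactly supported Hamiltonian isotopy in $T^*\RR^{N_1+N_2}$) are all induced by explicit symplectomorphisms, giving associativity, unitality, and symmetry up to cobordism-equivalence as required at the $\pi_0$ level.

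Third, preservation of the zero object: a zero object in $\lag_\Lambda(M)$ is by construction a brane admitting a null cobordism (e.g.\ a Lagrangian pushed off the zero section of $T^*\RR_t$). If $X$ is null-cobordant through some $W_0$, then $L \times W_0$ is a null cobordism for $L \times X$; symmetrically in the other variable. Consequently the action sends a zero object in either slot to a zero object.

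The main obstacle, and the heart of the argument, is preservation of exact triangles. In a stable $\infty$-category an exact triangle is characterized as a cofiber sequence $X \to Y \to \cone(f)$. In $\lag$, the cone of a morphism $f : X \to Y$ is realized by an explicit Lagrangian cobordism (the Biran--Cornea style mapping-cone construction, or, equivalently, the model internal to the cobordism $\infty$-category from~\cite{nadler-tanaka}). The strategy is to check that the direct product with $L$ of such a cone cobordism is itself a cone cobordism for $L \times f : L \times X \to L \times Y$; geometrically this is transparent because the construction of the cone happens entirely in the $T^*\RR_t$ direction and is unaffected by crossing with $L$ in a disjoint set of variables. Dually, fiber sequences are preserved for the same reason, and the symmetry argument handles the other slot (with a bit more care, since the Lagrangian $L$ now moves through cobordisms). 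Combining these observations yields exactness in each variable separately, completing the proof.
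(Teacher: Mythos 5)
Your proposal follows the same architecture as the paper's proof: the monoidal product and the action are given by direct product of branes (and of a brane with a cobordism), everything is verified at the level of $\pi_0$, and exact triangles are preserved because the geometric cone of Construction~\ref{construction. cone} visibly commutes with taking products in disjoint variables---that last step is verbatim the paper's argument. However, there is a genuine gap at the point you dismiss in a single clause: the symmetry isomorphism. You claim the swap $L \times L' \cong L' \times L$ is ``realized by a compactly supported Hamiltonian isotopy in $T^*\RR^{N_1+N_2}$.'' First, no compactly supported isotopy can accomplish this: the conical ends of $L \times L'$ and of $L' \times L$ are in general different Legendrians at infinity, and a compactly supported isotopy fixes a neighborhood of infinity. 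Second, the natural non-compactly-supported candidate---the cotangent lift of the base rotation turning the first $\RR^n$ factor into the second---carries $L \times L'$ to $\ov{L'} \times L$, the \emph{antipodal} image of $L'$ appearing because $x_{n+i} \mapsto -x_i$; note also that the block-swap matrix on the base has determinant $(-1)^n$, so for odd $n$ it does not even lie in the identity component of $O(2n)$. Repairing this sign is exactly the content of Section~\ref{section.swap-map} of the paper: one stabilizes to $(L \times L')^{\dd n}$, translates off the origin so the cut-off rotation field is defined, rotates by $90$ degrees (producing a translate of $(\ov{L'} \times L)^{\dd n}$), then rotates by $180$ degrees through the auxiliary stabilization coordinates to cancel the antipode, and translates back. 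Working with flows induced from vector fields on the zero section is also what guarantees that gradings and Pin structures are transported canonically; a general Hamiltonian isotopy gives no such control. Without this (or an equivalent) construction your symmetry does not exist as described, and the hexagon and involutivity checks have nothing to apply to.

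Two smaller but real errors. (a) The parenthetical claim that the product primitive ``still vanishes at infinity because each factor's does'' is false: $f_L \oplus f_{L'}$ vanishes only where both summands do, i.e.\ on the product of the two conical ends, whose complement in $L \times L'$ is non-compact. This is precisely why the paper must choose an eventually conical deformation $L \tensor L'$ of $L \times L'$ rather than use the product on the nose. (b) Bifunctoriality on homotopy categories requires the interchange law, and the two composites $(P \times \id_{L'_1}) \circ (\id_{L_0} \times P')$ and $(\id_{L_1} \times P') \circ (P \times \id_{L'_0})$ are \emph{not} equal on the nose; they are merely homotopic, via the $2$-morphism $P \times P'$ living over a rectangle in $T^*F^2$ with corners resolved---this is the paper's key observation and cannot be replaced by an assertion of strict commutation. (Your characterization of zero objects as ``branes admitting a null cobordism'' is also incorrect---every object admits a morphism to the empty brane---but the conclusion survives trivially, since the empty brane is a zero object and $L \times \emptyset = \emptyset$.)
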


\begin{remark}
Because $\lag_\Lambda^{\cB}(M)$ is stable, its homotopy category is naturally triangulated, just as the cohomology category of a pretriangulated dg-category is triangulated. Concretely, exact triangles in $\ho \lag_{\Lambda}^{\cB}(M)$ are those sequences of morphisms that arise from a co/fiber sequence in $\lag_\Lambda^{\cB}(M)$. 
\end{remark}

\begin{remark}
A higher version of Theorem~\ref{theorem. pt action}---that is, a version without the prefix $\ho$---would imply that for any $M$, $\lag^{\cB}_{\Lambda}(M)$ is linear over the ring spectrum $\cL^{\cB}$. It would also imply that (because $pt$ is the unit of the symmetric monoidal structure) $\cL^{\cB}$ is an $E_\infty$ ring spectrum. Informally, as $\ZZ$ is to the exact wrapped Fukaya category, $\cL^{\cB}$ is to exact Lagrangian cobordisms.
\end{remark}

\subsection{Applications of the results}\label{section:more-motivation}
Spectral algebra is typically a richer home for invariants than homological algebra. Accordingly, it is expected that $\lag_\Lambda(M)$ is a richer  invariant than the partially wrapped Fukaya category. For example, we have the following conjecture from~\cite{nadler-tanaka}:

\begin{conjecture}\label{conjecture:main}
For any choice of Liouville $M$, of $\Lambda \subset M$, and of $\cB$, we have that $\lag^{\cB}_\Lambda(M)$ is linear over $\cL^\cB$.

Further, suppose that $\Lambda$ is obtained as the union of $\core(M)$ and the Liouville flow of a stop. Then there is an equivalence
	\eqnn
	\lag^{\cB}_{\Lambda}(M) \tensor_{\cL^\cB} H\ZZ
	\simeq
	\cW_\Lambda(M)
	\eqnd
to the partially wrapped Fukaya category of $M$ with stop $\Lambda \cap \del_\infty M$.
\end{conjecture}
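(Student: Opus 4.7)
The plan is to separate the conjecture into its two assertions: (i) $\cL^\cB$-linearity, and (ii) the equivalence $\lag^\cB_\Lambda(M)\tensor_{\cL^\cB} H\ZZ \simeq \cW_\Lambda(M)$. For (i) I would upgrade Theorem~\ref{theorem. pt action} from a statement about homotopy categories to a statement about the $\infty$-categorical enhancement. The action functor is constructed geometrically via direct product of branes: given $L\in \lag^\cB_{pt}(pt)$ realized in $T^*\RR^N$ and $X\in \lag^\cB_\Lambda(M)$ realized in $M\times T^*\RR^{N'}$, one forms $L\times X\subset M\times T^*\RR^{N+N'}$ with the product brane structure. A cobordism in either factor produces a cobordism of products, so the operation extends to morphisms and higher simplices. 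The task is to coherently package this as an $E_1$-algebra action of $\lag^\cB_{pt}(pt)$ on $\lag^\cB_\Lambda(M)$. Because $pt$ generates $\lag^\cB_{pt}(pt)$ (the point case of Theorem~\ref{theorem. cocores}) and because direct product preserves the zero object and exact triangles in each variable, standard $\infty$-categorical monadic descent then promotes this to a module structure over $\cL^\cB=\End(pt)$.

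For (ii), the strategy is to reduce the equivalence to a comparison of endomorphism algebras on a common generating set. Let $\{D^\alpha\}$ be the Lagrangian cocores and linking disks from Theorem~\ref{theorem. cocores}. That theorem identifies $\lag^\cB_\Lambda(M)$ with the full subcategory of modules over the spectral endomorphism category
\eqnn
\cE \; := \; \End_{\lag^\cB_\Lambda(M)}\!\bigl(\textstyle\bigsqcup_\alpha D^\alpha\bigr)
\eqnd
consisting of finite iterated extensions of shifts of free rank-one modules. By the generation theorems of \cite{CDGG, gps2}, the same collection $\{D^\alpha\}$ generates $\cW_\Lambda(M)$, and hence $\cW_\Lambda(M)$ is equivalent to the analogous subcategory of modules over the $\ZZ$-linear $\ainfty$-category
\eqnn
E \; := \; \End_{\cW_\Lambda(M)}\!\bigl(\textstyle\bigsqcup_\alpha D^\alpha\bigr).
\eqnd
Given these identifications, the conjectured equivalence reduces to a single comparison at the level of endomorphism algebras:
\eqnn
\cE \tensor_{\cL^\cB} H\ZZ \;\simeq\; E.
\eqnd

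To construct the relevant comparison map $\cE \to E$, I would build a Floer-theoretic functor sending a cobordism representing a morphism in $\cE$ to its count of rigid pseudoholomorphic strips and disks with appropriate boundary and asymptotic conditions; the $\cL^\cB$-linearity from part (i) ensures this map factors through $\cE \tensor_{\cL^\cB} H\ZZ$. Surjectivity on $\pi_*$ amounts to the statement that every wrapped Floer class between two cocores is realized by some Lagrangian cobordism, and injectivity amounts to the statement that two cobordisms yielding the same Floer count differ by one whose endomorphism class is detected only by $\cL^\cB$ and is killed after base change to $H\ZZ$. The essential-surjectivity of the induced functor on module categories is automatic from the common generating set, and full faithfulness on the generators then propagates to the full subcategories of finite extensions on both sides by the two-out-of-three property of equivalences inside stable $\infty$-categories.

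The hard part will be the last step: producing the Floer-theoretic comparison $\cE \to E$ and proving that it induces an equivalence after $\tensor_{\cL^\cB} H\ZZ$. This is the bridge between the holomorphic-curve world (Fukaya theory, linear over $\ZZ$) and the parametrized-moduli-of-cobordisms world (linear over the a priori exotic ring spectrum $\cL^\cB$). Even constructing the map requires assembling moduli of $J$-holomorphic disks with boundary on parametrized families of Lagrangians in a way compatible with the stabilization $X\mapsto X\times T^*_0\RR^N$ that underlies the definition of $\lag^\cB_\Lambda(M)$. Morally, $\cL^\cB$ records the framed cobordism data of a point while $H\ZZ$ forgets it, so the expected content is that base change to $H\ZZ$ precisely trivializes the framing data that distinguishes cobordism-theoretic morphisms from Floer-theoretic ones. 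I would attack this by first verifying the equivalence in the toy case $M=\Lambda=pt$, where both sides become explicit module categories (over $\cL^\cB$ and over $\ZZ$ respectively), and then bootstrapping via Theorem~\ref{theorem. cocores} and the corresponding Fukaya-side generation theorem to deduce the general case.
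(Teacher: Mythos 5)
You are attempting to prove Conjecture~\ref{conjecture:main}, which the paper does not prove: it is stated as an open conjecture (imported from~\cite{nadler-tanaka}), and the only result the paper establishes in its direction is Theorem~\ref{theorem. pt action}, a homotopy-category ($\pi_0$-level) version of the linearity statement, explicitly offered as \emph{evidence} rather than proof. So there is no proof in the paper to compare yours against; the question is whether your proposal closes the gap between that evidence and the conjecture, and it does not.

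Two concrete gaps. First, in part (i) you assert that because direct product preserves zero objects and exact triangles, ``standard $\infty$-categorical monadic descent'' upgrades the $\pi_0$-action to an $E_1$-action and hence an $\cL^\cB$-module structure on $\lag^\cB_\Lambda(M)$. This is precisely the step the paper identifies as the obstruction: the product $P \times P'$ of two cobordisms lives over a rectangle rather than a line (Figure~\ref{figure.P-times-Q}), so even defining $P \tensor P'$ requires choosing one of two boundary compositions, and these choices must be made coherently across all higher simplices. The paper's remark following that figure says exactly that a ``host of coherence questions'' arises at the $\infty$-categorical level; no monadicity argument substitutes for constructing this coherence data, and generation of $\lag^\cB_{pt}(pt)$ by the point is irrelevant to it (generation concerns objects, not the construction of an $E_1$-structure on the action functor). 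Second, in part (ii) everything rests on a Floer-theoretic comparison map $\cE \to E$ and on the claim that base change along $\cL^\cB \to H\ZZ$ kills exactly the difference between cobordism-theoretic and Floer-theoretic morphisms. You defer this as ``the hard part,'' but it is not a deferred technical verification---it is the open problem itself. The paper states, in the remark contrasting with~\cite{CDGG}, that ``it is yet unclear how the differentials in the Floer chain complex (and other counts of holomorphic disks) relate to the topology of the space of Lagrangian cobordisms.'' The closest existing tool is the pairing functor $\Xi$ of Remark~\ref{remark:lag-detects-fuk}, which lands in modules over the \emph{compact} Fukaya category and is not known to induce an equivalence after $\tensor_{\cL^\cB} H\ZZ$. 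In short, your outline is a reasonable research program consistent with the paper's own motivation (Section~\ref{section:more-motivation}), and its reduction to endomorphism algebras of the generators via Theorem~\ref{theorem. cocores} is sensible; but both of its load-bearing steps are restatements of the open conjecture, not consequences of the cited theorems, so this is not a proof.
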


\begin{remark}
Any ring can be naturally made into a ring spectrum; the notation $H\ZZ$ merely emphasizes that we think of the usual ring $\ZZ$ as a ring spectrum, and hence we may tensor with $H\ZZ$ along any map of ring spectra $\cL^\cB \to H\ZZ$.

Thus, the conjecture states that the partially wrapped category is obtained as a ``change of coefficients'' from the Lagrangian cobordism $\infty$-category. Theorem~\ref{theorem. pt action} is evidence toward the $\cL^\cB$-linearity of $\lag^\cB$.
\end{remark}

Before we proceed, we recall the following:

\begin{theorem}\label{theorem:lag-detects-fuk}
Let $\Lambda$ contain the core $\core(M)$ (Notation~\ref{notation:core}).
\enum
\item\cite{tanaka-pairing} There exists a functor
	\eqnn
	\lag_\Lambda(M) \times  \fukaya^{\cmpct}_\Lambda(M)^{\op} \to \chain_\ZZ
	\eqnd
pairing $\lag_\Lambda(M)$ with the Fukaya category of compact Lagrangians inside $M$. The target is the $\infty$-category of chain complexes over $\ZZ$. 
\item \cite{tanaka-exact} Moreover, the associated functor
	\eqnn
	\Xi: 
	\lag_\Lambda(M) \to \fun_{A_\infty}(\fukaya^{\cmpct}_\Lambda(M)^{\op}, \chain_\ZZ	)
	\eqnd
is exact, meaning it sends exact triangles in the domain to exact triangles in the (pretriangulated closure of) the Fukaya category.
\enumd
\end{theorem}

\begin{remark}
It was proven in~\cite{tanaka-surgery} that Polterovich surgery of Lagrangians produces exact sequences in $\Lag_\Lambda(M)$.
Thus, Theorem~\ref{theorem:lag-detects-fuk} gives another proof that surgery results in exact sequences in modules over the Fukaya category of compact branes. Analogues of such a fact were proven in~\cite{mak-wu} and in the 2007 (pre-publication) version of ~\cite{fooo}.
\end{remark}

\begin{example}\label{example:morse-theory-RN}
Let  $M=\Lambda=pt$, in which case an object of $\lag_\Lambda(M)$ is given by any brane in $T^*\RR^N$, for some $N$. Let $A\subset \RR^N$ be a compact manifold and let $X$ be the conormal to $A$. Then Theorem~\ref{thm:main-vague} guarantees that we can write $X$ as an iterated extension by a cotangent fiber of $T^*\RR^N$. Tracing through our proof, one sees that this iteration can be modelled geometrically as follows: 

Pick a generic point $y \in \RR^N$ and let $L$ be its cotangent fiber. Then wrapping by the geodesic flow models the Reeb chords from $\del_\infty X$ to $\del_\infty L$ as critical points of the distance functional from $A$ to $y$. (Considering such functions is a standard way to prove the existence of Morse functions for compact smooth manifolds $A$.)

Running through the proof of Theorem~\ref{theorem. cocores}, we obtain a commutative diagram in $\lag_\Lambda(M)$ as in Figure~\ref{figure:filtration-staircase}, where every square is a pushout square. 

\begin{image}
\label{figure:filtration-staircase}
\eqnn
\xymatrix{
			 0 \ar[r]  	&L[a_{n-2}] \ar[d] \ar[r] & \ldots \ar[r] \ar[d] & \ldots \ar[d]\ar[r] 	& X_{n-1} = L[a_{n-1}+1] \ar[d] \\
						& 0 \ar[r] 		& \ddots \ar[r] \ar[d]	& \ldots \ar[r] \ar[d] & \vdots \ar[d] \\
						&			& 0 \ar[r]					&		L[a_0] \ar[d] \ar[r]	& X_1 \ar[d] \\
						&			&					&		0 \ar[r] 	& X_0 = X.
}
\eqnd
\end{image}

This means, for example, that $L[a_0] \to X_1 \to X_0$ is an exact sequence.\footnote{Or, upon passage to the homotopy category, an exact triangle (in the lingo of triangulated categories).} We have indexed the $X_i$ so as to agree with the indexing following Theorem~\ref{theorem. cocores} and Proposition~\ref{prop. generation and cones}. Informally, $X_1$ is obtained by surgering $X_0$ with $L$ at a covector representing the  minimum for the distance Morse function. $X_2$ is obtained by further surgering $X_1$ along a covector representing the next critical point of the distance Morse function, and so on; the claim is that the final stage $X_{n-1}$ is obtained as an object equivalent to cotangent fiber.

For an algebraic consequence of this geometry, we may now look at the image of this diagram under the functor 
\eqnn
	\Xi: \lag_{pt}(pt) \to \chain_\ZZ
\eqnd
(see Remark~\ref{theorem:lag-detects-fuk}), where we have used the fact that the $\infty$-category of $\ZZ$-linear chain complexes is (equivalent to) the $\infty$-category of modules over the compact Fukaya category of a point. Because $L$ is sent to $\ZZ$ by~\cite{tanaka-pairing}, the above diagram exhibits the image of the conormal $X$ as an iterated extension of shifts of the chain complex $\ZZ$. Moreover, the image of the righthand column under $\Xi$ is a filtration of the chain complex $\Xi(X)$. This filtration is the Morse filtration on the chain complex $C_*(A)$, induced by the distance function from $A$ to $y$. (In particular, the shifted $\ZZ$ appearing as associated gradeds are the reduced homologies of the Morse attaching spheres.)
\end{example}

\begin{example}
We saw in Example~\ref{example:morse-theory-RN} that after applying $\Xi$, the proof method of Theorem~\ref{theorem. cocores} seems to recover a filtered Morse chain complex of submanifolds $A \subset \RR^N$.

Theorem~\ref{theorem. point} suggests further that, when restricted to the special case of $M=\Lambda=pt$ and to those branes that arise as conormal bundles of submanifolds $A$, $\lag_\Lambda(M)$ recovers information about the stable homotopy type of $A$, smashed with a coefficient ring spectrum $\cL^\cB$. This is consistent with Conjecture~\ref{conjecture:main}, as chains of $A$ can be recovered simply by observing:
	\eqnn
	\Xi(\Sigma^\infty A_+ \tensor \cL^\cB) \simeq \Sigma^\infty A_+ \tensor \cL^{\cB} \tensor_{\cL^\cB} H\ZZ \simeq \Sigma^\infty A_+ \tensor H\ZZ \simeq C_*(Z).
	\eqnd
Here, we have used $\tensor$ to mean the smash product for spectra; this is also sometimes written as $\smsh$ or as $\tensor_{\SS}$--i.e., tensoring over the sphere spectrum.
\end{example}

\subsection{Remarks on the proofs}
\label{section. remarks on proofs}
		\subsubsection{Proving generation.}
		The proof method for Theorem~\ref{theorem. cocores} is a strategy we learned (for the wrapped Fukaya setting) from conversations with Shende at the KIAS Higher Categories and Mirror Symmetry conference, where it was suggested by Shende that the same proof should work for Lagrangian cobordisms; see~\cite{gps2} for the argument in the wrapped Fukaya setting. In the present work, we need not perform any computations in the Fukaya category. As has been the trend in recent years, if the necessary lemmata can be reduced to statements with no mention of holomorphic disk computations, one expects to be able to carry out the proof in the Lagrangian cobordism setting. This is what we have done.

		The proof is as follows: By design, a brane $L$ and its stabilization $L \times T^*_{0}\RR$ are equivalent objects in the Lagrangian cobordism $\infty$-category. Via genericity, one can flow $L \times T^*_{0}\RR$ past Lagrangian cocores in $M \times T^*\RR$ in such a way that (i) we only introduce one intersection point at a time, and (ii) the result is contained entirely in $M \times T^*\RR_{\tau < 0}$ (i.e., where the cotangent component is negative). The first condition guarantees that we can resolve each intersection point via Polterovich surgery to obtain a mapping cone sequence~\cite{tanaka-surgery}, while the latter states that the end result is a zero object in the Lagrangian cobordism $\infty$-category~\cite{nadler-tanaka}. 
		
There are some details to be filled in, of course---see Section~\ref{section. proof generation}. The main new ingredient is that attaching exact handles at infinity does not change the equivalence class of an object in $\lag_\Lambda(M)$. (The corresponding statement for the partially wrapped Fukaya category was shown in~\cite{gps2}, but by using Viterbo restriction maps, which are not available to us.) The proof we give yields a vertically bounded Lagrangian cobordism realizing the equivalence; this boundedness is also necessary in exhibiting a proof of Theorem~\ref{thm:geometric-version}.

\begin{remark}[Contrast with~\cite{CDGG}.]
		The reader can find a proof of the same wrapped Fukaya category result in~\cite{CDGG} for Weinstein sectors (which can always be obtained from a Weinstein manifold with a chosen stop by removing a standard neighborhood of the stop); we highlight a difference in proof method for the reader.

		In~\cite{CDGG}, one tries to construct a Lagrangian $L'$ out of an initial Lagrangian $L$ such that (i) $L'$ is obtained by surgering $L$ along cocores, and (ii) $L'$ does not intersect the skeleton (hence is a zero object). In loc. cit., this strategy produces an $L'$ {\em immersed} in $M$, and the authors apply Floer theory for immersed objects by lifting to a Legendrian and passing to a Lagrangian cobordism for Legendrians. Computations utilizing an augmentation algebra show that one produces a twisted complex in the Fukaya category equivalent to $L$, constructed out of cocores.

		When proving results about the Lagrangian cobordism $\infty$-category, Floer-theoretic techniques usually fall short of proving structural statements of this sort. In particular, the augmentation algebra is not available in the present paper, as it is yet unclear how the differentials in the Floer chain complex (and other counts of holomorphic disks) relate to the topology of the space of Lagrangian cobordisms.

		The only step in which Floer-theoretic tools are crucially used in~\cite{CDGG}, and hence the main obstruction to extending their strategy, is the { immersed} nature of the Lagrangian surgeries: If a cocore disk intersects $L$ in more points than along the skeleton, we do not know how to construct a Lagrangian cobordism via surgeries while staying within the embedded setting.
\end{remark}

		\subsubsection{Proving the point acts.}
		The proof of Theorem~\ref{theorem. pt action} is different, and utilizes some simple observations about the theory of cobordisms. To give the reader an idea of what the proof feels like: A proper proof in similar spirit would show that the sphere spectrum (modeled as an infinite loop space of framed cobordisms) admits a commutative ring structure in the $\infty$-category of spectra. In this paper, we do not give a proper proof, in that we only prove results at the level of homotopy categories---as a result, our arguments are quite elementary and only involve simpleton observations about isotopies in $\RR^N$. 

	\subsection{Acknowledgments}
		We thank Sheel Ganatra, John Pardon, and Vivek Shende for helpful conversations. We would also like to thank the Korea Institute for Advances Study and American Institute of Mathematics for hosting the conference and workshop that spurred this paper. I was also supported by the Isaac Newton Institute for Mathematics Sciences during the Homotopy Harnessing Higher Structures semester in Fall 2018.

\section{Preliminaries}
	
	With the exception of Lemma~\ref{lemma.attachment}, every fact below is old news (i.e., has proofs in previous papers).

	\subsection{Geometric set-up and notation}\label{section. geometric setup}
	
\begin{notation}
		We will often use the notation $E = F = \RR$. This is to disambiguate the many roles that $\RR$ plays for the Lagrangian cobordism $\infty$-category---$E$ is a stabilization direction, while $F$ is a time/propagation/morphism direction.
\end{notation}
		
\begin{notation}
		We will often write $E^N \subset T^*E^N$ to mean the zero section.
\end{notation}

		\subsubsection{Liouville manifolds and stops}
			We engage in some rapid-fire recollections; details can be found, for example, in~\cite{ganatra-pardon-shende,gps2}.

			A Liouville manifold $(M,\theta)$ is an exact symplectic manifold equivalent to a symplectization outside some compact set. We let $\del_\infty M$ denote the associated contact manifold, so that $M \cong X^o \cup_{\del_\infty M} \del_\infty M \times [1,\infty)$ with $X^o \subset X$ compact. A {\em stop} is any closed subset $\ff \subset \del_\infty M$. (Note that this definition of stop is more general than the definition originally given by Sylvan~\cite{sylvan-thesis}.) The subset $X^o$ is called a {\em Liouville domain}.

			We let $Z$ denote the Liouville vector field of $M$, defined by $(d\theta)(Z,-) = \theta.$

			If $M$ and $Y$ are Liouville manifolds, so is $M \times Y$. 
			
			Finally, we say something is true {\em at infinity} if it is true inside a conical collar---equivalently, outside a sufficiently large compact set of $M$.

\begin{remark}
		Throughout this work, we follow the usual convention that Liouville domains $M^o$ are assumed compact. Note that this compactness assumption is unnecessary in the works~\cite{nadler-tanaka, tanaka-surgery, tanaka-pairing, tanaka-exact}.
\end{remark}

		In this work, we deal with the special case that $Z$ is gradient-like for a Morse function; we will call such an $M$ {\em Weinstein}. (We caution that the definition of Weinstein tends to vary from work to work in the literature, even in works by the same author.) 
		
\begin{remark}
		This main difference between the Weinstein and the general Liouville case is that Weinstein manifolds, like ordinary manifolds, can be built out of (symplectic) handle attachments. As a result, Weinstein manifolds also have natural Lagrangians submanifolds given as cocores of the attaching handles of index $n$ (where $\dim_\RR M = 2n$). It is more difficult to find Lagrangians inside arbitrary Liouville manifolds.
\end{remark}

\begin{construction}[Products of stops]\label{construction.product stops}
				If $\ff \subset \del_\infty M$ and $\fg \subset \del_\infty Y$ are choices of stops, we endow $M \times Y$ with the stop $\ff \times \core(Y) \cup \core(M) \times \fg \cup \ff \times \fg \times \RR.$
	\end{construction}

\begin{remark}
				Consider Liouville {\em domains} (which are compact, with boundary) $M^o$ and $Y^o$, so $M^o \times Y^o$ is a manifold with corners. In particular, $\del M^o \times \del Y^o \subset \del(M^o\times Y^o)$ admits a collar neighborhood topologically, but one must choose a smoothing to render $\del(M^o \times Y^o)$ smooth; this smoothing then contains a smooth copy of a collar neighborhood $(\del M^o \times \del Y^o) \times \RR$. This explains the $\RR$ factor in the term $\ff \times \fg \times \RR \subset \del_\infty M \times \del_\infty Y \times \RR$ in Construction~\ref{construction.product stops}.
	\end{remark}

\begin{example}
			Here are the examples of interest to us.
\begin{itemize}
				\item Let $M$ be a Weinstein manifold, so that $Z$ is gradient-like for some Morse function $f$. Then $\core(M)$ is the union of the stable manifolds that ascend to $\crit(f)$. 
				\item Let $E = \RR$ and let $T^*E$ be equipped with $Z = 1/2(q{\del_q} + p{\del_p})$. We choose a stop $\fg = \{\pm \infty\} \in [-\infty,\infty]$. Or, if one thinks of the unit disk as the corresponding Liouville domain, the stop $\fg$ is given by the points $\{\pm 1\}$ in the boundary of the unit disk.
				\item Then $M \times T^*E$ has a stop given by $\core(M) \times \fg$. 
				\item The product $T^*E^N$ can be thought of as associated to a Liouville domain diffeomorphic to $D^{2N} \subset \CC^N$, whose stop is the real equator $S^{N-1} \subset \RR^N \cap D^{2n}$.
	\end{itemize}
	\end{example}

\begin{remark}
		Let $M$ be Weinstein, possibly with a stop $\ff$, and consider $T^*E$ with stop $\fg$ as above. As we will see, in this paper, the invariants we care about for $M$ are equivalent to the invariants we care about for $M \times T^*E$. The reader may benefit from a reminder that, when we prove something for a Weinstein manifold $M$ in this paper, we will have also proven it for all of its stabilizations $M \times T^*E^N$, $N \geq 0$.
	\end{remark}
	
	\subsubsection{Lagrangians}
	
	A Lagrangian $L \subset M$ is {\em eventually conical} if, outside some compact set, $L$ is closed under the Liouville flow.
	
	A {\em primitive} for $L$ is a smooth function $f: L \to \RR$ such that $df = \theta|_L$. We will always demand $L$ to be equipped with a primitive $f$ such that $f$ vanishes outside a compact subset; in particular, this guarantees that $L$ is eventually conical. Given any eventually conical Lagrangian, such an $f$ can be obtained after a deformation of the Lagrangian. 
	
	{\em Brane} is a potentially vague term in this paper, but the only ambiguity is in the geometric decorations we demand of a Lagrangian. The data of a brane always has an underlying eventually conical Lagrangian $L$, equipped with a primitive vanishing outside a compact set; the additional tangential data we demand may include a grading or a relative Pin structure. Our theorems are true as stated without specifying what kind of brane convention $\cB$ we work with, so long as the Polterovich surgery construction works as in the framework of~\cite{tanaka-surgery}. 
	
	\subsubsection{Gradings}\label{section. gradings}
	Though gradings are not required as part of our brane structures, we will give a standard recollection, just because graded Lagrangians provide insightful examples of tangential structures interacting with the stability of the stable $\infty$-category.
	
	A grading on an $n$-dimensional Lagrangian is a generalization of the notion of an unsigned slope. 
	
	For example, when $L \subset T^*\RR^n$ is a linear subspace, let $A_L \in U(n)$ be a choice of real basis for $L$; then $A_L$ is uniquely determined up to an $O(n)$ action, and in particular $\det(A_L)^2$ is an invariant of $L$ as a linear Lagrangian in $T^*\RR^n \cong \RR^{2n}$. A {\em grading} on $L$ is a choice of real number $\alpha_L$ such that $e^{i \alpha_L} = \det(A_L)^2$. 
	
	Of course, we must specify how we think of $T^*\RR^n$ as a complex vector space. With the usual convention that $\theta = \sum_i p_i dq_i, \omega = \sum dp_idq_i$, and that $\omega(-,J-)$ defines a real inner product, we have $J(\del p_i) = \del q_i$ and $J(\del q_i) = - \del p_i$. 
	
	In the non-linear case, it is convenient to choose a global trivialization of the complex determinant bundle of $TM$---for example, this requires that $c_1(TM)$ be two-torsion. A lift of the $\det^2$ function $L \to S^1$ to a function $\alpha: L \to \RR$ is a {\em grading} on $L$. The obstruction to this lift is the class in $H^1(L,\ZZ)$ classified by $L \to S^1$. 

	\subsection{Stable infinity-categories}\label{section. stable infty-categories}
	
	We briefly recall the basics of $\infty$-categories. We recommend~\cite{htt} for a full account; the appendix of~\cite{nadler-tanaka} also has a presentation streamlined for our purposes.
	
	Recall that an $\infty$-category is a simplicial set satisfying the weak Kan condition~\cite{htt}. This has also been called a quasi-category by Joyal~\cite{joyal}, and a weak Kan complex by Boardman and Vogt~\cite{boardman-vogt}. 
	
\begin{defn}[Lurie~\cite{higher-algebra}]
	An $\infty$-category $\cC$ is called {\em stable} if the following three properties hold: (i) $\cC$ has a zero object, (ii) every morphism admits a fiber and a cofiber, and (iii) the induced shift functor is an equivalence. 
	\end{defn}
	
\begin{remark}
	Let us elaborate on the conditions for the reader's convenience. 
	
	(i) A zero object is an object $0 \in \cC$ such that $0$ is both terminal and initial---that is, for any $L \in \cC$, the spaces $\hom_\cC(0,L)$ and $\hom_\cC(L,0)$ are contractible. In particular, any two zero objects are equivalent in $\cC$. 
	
	(ii) An edge $\Delta^1 \to \cC$ in a quasi-category is a morphism; the 0-simplices $d_1(\cC)$ and $d_0(\cC)$ are the source and target, respectively, so we will often say ``let $Q: L_0 \to L_1$ be a morphism'' where $Q$ denotes the edge, and $L_0=d_1(Q)$ and $L_1=d_0(Q)$. 
	
	A {\em fiber} for $Q$ is a pullback of the diagram $L_0 \xra{Q} L_1  \leftarrow 0$; this is a functor $\Delta^1 \times \Delta^1 \to Q$, which we informally depict as
		\eqnn
		\xymatrix{
			F \ar[r] \ar[d] & L_0 \ar[d]^Q \\
			0 \ar[r] & L_1
		}
		\eqnd
	such that the diagram is a limit diagram in $\cC$. Dually, a {\em cofiber} for $Q$ is a pushout of the diagram $0 \leftarrow L_0 \xra{Q} L_1$, which can be informally depicted as
		\eqnn
		\xymatrix{
			L_0 \ar[r]^{Q} \ar[d] & L_1 \ar[d] \\
			0 \ar[r] & C.
		}
		\eqnd
	It is common to refer to the objects $F$ and $C$ as the fiber and cofiber of $Q$, but we will refer the reader to Warning~\ref{warning. fiber sequences}. 
	
	(iii) Finally, given any object $L$, one can now take the cofiber of the map $L \to 0$, which we will refer to as $L[1]$ or as $\Sigma L$. The fiber of the map $0 \to L$ is called $L[-1]$ or $\Omega L$. By the universal properties of (co)limits and zero objects, the assignments $L \mapsto L[\pm 1]$ define endofunctors $\cC \to \cC$; we demand that these be equivalences, and in fact, it follows that these are mutually inverse.  
	\end{remark}
	
\begin{remark}
		It is often said that in the setting of $\infty$-categories, a limit or a colimit is ``really'' a homotopy limit or a homotopy colimit. This has concrete truth if your $\infty$-category is known to be an $\infty$-category associated to a model category, and the distinction between the two kinds of limits (homotopy limit, or just limit) is important in model categorical language. However, in an $\infty$-category, there is only one definition of limit, and this definition of limit automatically demands a universal property with respect to all higher coherences. For example, there is no unique notion of composition in most $\infty$-categories, so the inarticulability of ``strict'' limits is a feature of the formalism. 
\end{remark}

\begin{warning}[Fiber sequences are more than just sequences]\label{warning. fiber sequences}
			In an $\infty$-category, a fiber sequence is not just the data of morphisms $A \to B \to C$; informally, a fiber sequence must come equipped with a homotopy from ``the'' composite $A \to C$ to the zero map $A \to 0 \to C$.\footnote{Again, composition is not uniquely defined in an $\infty$-category, but is only determined up to contractible choice.} Regardless, to save space and to evoke intuition from the theory of triangulated categories, we will often write ``let $A \to B \to C$ be a fiber sequence'' with the choice of null-homotopy non-explicit in the notation. A more proper convention would be: ``Let the map $\Delta^1 \times \Delta^1 \to \cC$, denoted by
				\eqnn
					\xymatrix{
					A \ar[r] \ar[d] & B \ar[d] \\
					0 \ar[r] & C,
					}
				\eqnd
			be a fiber sequence,'' but paper is precious.
\end{warning}
		
\begin{remark}
		Any pretriangulated dg-category is a stable $\infty$-category; this can be seen simply by applying the dg-nerve construction of Lurie~\cite{higher-algebra}; likewise for pretriangulated $A_\infty$-categories, by applying the $A_\infty$ nerve of Faonte or of the present author (see~\cite{faonte, tanaka-pairing}). It is also straightforward to verify (though the octahedral axiom is tedious as usual) that any stable $\infty$-category has a homotopy category which is triangulated: The distinguished triangles are those that descend from (co)fiber sequences. (A proof is in~\cite{higher-algebra}.)
		
		A frequently invoked slogan is that a stable $\infty$-category is like a triangulated category, except one remembers higher homotopies rendering mapping cones functorial, and one also allows for linearities that are not integral, but spectral.
\end{remark}

\begin{notation}\label{notation. span in oo-category}
			Let $\cC$ be a stable $\infty$-category and fix a collection of objects $\{D\} \subset \ob \cC$. We let $\langle \{D\} \rangle \subset \cC$ denote the smallest full subcategory of $\cC$ which contains $0$ and contains $\{D\}$, and is closed under pullbacks and pushouts.
\end{notation}

		For the record, we state the following straightforward result:

\begin{prop}\label{prop. generation and cones}
			Let $\cC$ be a stable $\infty$-category and fix a collection of objects $\{D\} \subset \ob \cC$. Then $X$ is in the essential image of the inclusion $\langle \{D\} \rangle \to \cC$ if and only if there exists objects $X = X_0, \ldots,  X_n \simeq 0$ such that for each $i$, there exist integers $a_i$ and objects $D_i \in \{D\}$ fitting into a fiber sequence
				\eqnn
				D_i[a_i] \to X_{i+1} \to X_{i}.
				\eqnd
\end{prop}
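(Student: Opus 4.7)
The plan is to handle the two implications separately. For one direction I would proceed by induction on the filtration length, and for the other I would verify that the class $\cS$ of objects admitting such a filtration satisfies the universal property defining $\langle \{D\} \rangle$. Throughout I would use only that $\cC$ is stable, i.e.\ that (co)fibers exist and pushout squares coincide with pullback squares.

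For the \emph{if} direction: assume $X$ carries a filtration $X = X_0, \ldots, X_n \simeq 0$ with fiber sequences $D_i[a_i] \to X_{i+1} \to X_i$. I would induct downward on $i$, taking $X_n \simeq 0 \in \langle \{D\} \rangle$ as the base. Since $\langle \{D\} \rangle$ contains $0$ and is closed under pullbacks and pushouts, the shift functors $[\pm 1]$ preserve it, so each $D_i[a_i]$ lies in $\langle \{D\} \rangle$. A fiber sequence is the same data as a pullback-equals-pushout square with a zero corner, so $X_i$ is the pushout of $0 \leftarrow D_i[a_i] \to X_{i+1}$ and hence lies in $\langle \{D\} \rangle$. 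Iterating yields $X = X_0 \in \langle \{D\} \rangle$.

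For the \emph{only if} direction: by minimality of $\langle \{D\} \rangle$ it suffices to verify that $\cS$ contains $0$ and each $D \in \{D\}$ and is closed under pushouts and pullbacks. The object $0$ appears via the empty filtration ($n=0$), and each $D$ appears via $X_0 = D, X_1 \simeq 0$ with fiber sequence $D[-1] \to 0 \to D$, which is a fiber sequence by the very definition of $D[-1]$. Closure of $\cS$ under shifts is immediate: applying $[k]$ to a filtration of $X$ term by term produces one for $X[k]$, with each $a_i$ replaced by $a_i+k$.

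The technical heart of the argument is closure under extensions: given a fiber sequence $X \to Y \to Z$ with $X, Z \in \cS$, I must build a filtration of $Y$. Fix a filtration $Z_0 = Z, \ldots, Z_m \simeq 0$ and define $Y_j := Y \times_Z Z_j$; then $Y_0 \simeq Y$ and $Y_m \simeq X$. Pasting the defining pullback squares shows $\fib(Y_{j+1} \to Y_j) \simeq \fib(Z_{j+1} \to Z_j) \simeq D_j^Z[a_j^Z]$, and concatenating this with any filtration of $X$ produces the desired filtration of $Y$. Once extension closure is in hand, closure under pushouts follows by writing $P = A \sqcup_C B$ into the cofiber sequence (equivalently fiber sequence) $A \to P \to B/C$, where $B/C := \mathrm{cof}(C \to B)$ lies in $\cS$ (rotate the fiber sequence $C \to B \to B/C$ and combine extension closure with shift closure applied to $C[1]$); the pullback case is dual. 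The step I expect to be the main obstacle is the extension/pullback-pasting argument, since the identification of successive fibers via pasted pullback squares must be carried out at the level of $\infty$-categorical (co)limits, but this is essentially formal.
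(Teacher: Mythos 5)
Your proof is correct, and in fact it supplies something the paper does not: the paper states this proposition ``for the record'' as a straightforward result and gives no proof at all. Your argument is the standard one that the authors presumably had in mind. Both directions check out: the downward induction for the ``if'' direction correctly uses that a fiber sequence $D_i[a_i] \to X_{i+1} \to X_i$ exhibits $X_i$ as the pushout of $0 \leftarrow D_i[a_i] \to X_{i+1}$ (and that $\langle \{D\}\rangle$ is closed under shifts, since $D[\pm 1]$ are (co)fibers of maps to and from $0$); and for the ``only if'' direction, the key step---closure of the filtration-admitting class $\cS$ under extensions, proved by pulling back a filtration of $Z$ along $Y \to Z$ and identifying successive fibers via pasting of pullback squares---is exactly right, as is the reduction of general pushouts and pullbacks to extensions via the cofiber sequence $A \to A \sqcup_C B \to \mathrm{cofib}(C \to B)$ and its dual. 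The one point worth making explicit in a write-up is the pasting identification $\fib(Y_{j+1}\to Y_j) \simeq \fib(Z_{j+1}\to Z_j)$, which uses that the composite $0 \to Y_j \to Z_j$ is the zero map; this is formal in a stable $\infty$-category, as you anticipate.
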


\begin{remark}
The above fiber sequence is of course equivalent to the fiber sequence 
				\eqnn
				X_{i+1} \to X_{i} \to 
				D_i[a_i+1].
				\eqnd
\end{remark}

	\subsection{The infinity-category of Lagrangian cobordisms}
		Fix $N \geq 0$. Let us first sketch the definition of an $\infty$-category $\lag^N_\Lambda(M)$. 
		
		An object $L \in \ob \lag^N_\Lambda(M)$ is the data of an eventually conical Lagrangian $L \subset M \times T^*E^N$, together with a brane structure which we shall not make explicit in the notation.  Moreover, $L$ must satisfy the following condition: The complement of the conical ends of $L$ must be contained in a compact region of $M \times T^*E^N$ (this means that, on the core of $M \times T^*E^N$, $L$ has compact support), and $L$ must avoid the stop of $M \times T^*E^N$ at infinity.

\begin{defn}[Morphisms]\label{defn. non-characteristic cobordism}
		Given two branes $L_0, L_1 \subset M \times T^*E^N$, a morphism is a choice of eventually conical Lagrangian brane $Q \subset M \times T^*E^N \times T^*F$, satisfying the following: 
			\enum
			\item
			(Collaring.) There exists $t_0 \leq t_1 \in F$ so that $Q|_{t \leq t_0} = L_0 \times (-\infty,t_0] \subset (M \times T^*E^N) \times T^*F$, where $(-\infty,t_0] \subset T^*F$ is a subset of the zero section. Likewise, we require $Q|_{t \geq t_1} = L_1 \times [t_1,\infty)$. 
			\item ($\Lambda$-avoiding.) Let $\tau$ denote the cotangent coordinate of $T^*F$. When $\tau <<0$, we demand that $Q$ does not intersect $\Lambda$. Because $Q$ is eventually conical, this means that if $Q$ has any conical part where $\tau \to -\infty$, the region $E^N \times F \times \{\tau << 0\} \times \Lambda$ does not intersect $Q$. We say that such a $Q$ is $\Lambda$-avoiding, or $\Lambda$-non-characteristic.
			\enumd
		Such a $Q$ is a morphism from $L_0$ to $L_1$, and is called a Lagrangian cobordism from $L_0$ to $L_1$. 
\end{defn}
		
\begin{example}
		The identity morphism of $L$ to itself is given by $L \times F \subset (M \times T^*E^N) \times T^*F$. 
\end{example}
		
		Higher morphisms are defined as collared, eventually conical Lagrangians $Q \subset M \times T^*E^N \times T^*F^M$ that are also $\Lambda$-avoiding. The collection of all (higher) morphisms and objects fit together to form a simplicial set; we model $\lag_\Lambda(M)$ as a quasi-category in this way. For details, see~\cite{nadler-tanaka} and~\cite{tanaka-pairing}.
		
		Importantly, one has a simplicial set $\lag^N_\Lambda(M)$ for every $N \geq 0$ consisting of objects contained in $M \times T^*E^N$ and morphisms between them; there are moreover stabilization maps
			\eqnn
			\dd: \Lag^N_\Lambda(M) \to \Lag^{N+1}_\Lambda(M),
			\qquad
			L \mapsto L \times T^*_{0}E
			\eqnd
		given by taking the direct product of a brane (or a cobordism) with the cotangent fiber at the origin of $E$. We define $\lag_\Lambda(M)$ to be the sequential colimit under these stabilization functors (or, by identifying a $\Lag^N_\Lambda(M)$ with its image under $\dd$, the union). In particular, in $\lag_\Lambda(M)$, an object $L$ is equivalent to its stabilization $L \times T^*_{0}E^N$ for any $N$. 
		
\begin{notation}
		Given $L \in \ob\lag_\Lambda(M)$, modeled as $L \subset M \times T^*E^N$ for some $N$, we let $L^{\dd} = L \times T^*_0 E$ be the stabilization, and we let $L^{\dd n} = L \times T^*_0 E^n$ be the $n$-fold stabilization.
\end{notation}

\begin{remark}
		Note that the $\Lambda$-avoiding condition is only imposed on the part of a cobordism $Q$ with {\em negative} $\tau$ coordinate, with no restrictions on the positive bit. This asymmetry is what makes $\lag_\Lambda(M)$ not an $\infty$-groupoid.
\end{remark}
		
		The main theorem of~\cite{nadler-tanaka} is that $\lag_\Lambda(M)$ is a stable $\infty$-category.  A zero object is given by the empty Lagrangian), and every morphism admits a fiber and cofiber. The cofiber, also known as the mapping cone, is as follows:
		
\begin{construction}[The mapping cone $\cone(Q)$] \label{construction. cone}
		Let $Q: L_0 \to L_1$ be a morphism; we model it by $Q \subset M \times T^*E^N \times F$. Then we can construct two connected, eventually conical curves $\gamma_0 , \gamma_1 \subset T^*F$ such that
\begin{itemize}
			\item $\gamma_0 \subset T^*(-\infty, t_0]$, $\gamma_0 = (t_0-\epsilon, t_0]$ near $t_0$, and $\gamma$ is equal to a vertical curve $\{t = const\}$ for $\tau<<0$, for some $t < t_0$. Further, $\gamma_0$ is equipped with a primitive vanishing where $\tau <<0$ and near $t_0 \in F$.
			\item Likewise, $\gamma_1 \subset T^*[t_1,\infty)$ with $\gamma_1$ equal to the zero section near $t_1$, and equal to a vertical curve $\{t = const\}$ for $\tau<<0$ and for some $t > t_1$. We demand $\gamma_1$ is also equipped with a primitive vanishing where $\tau<<0$ and near $t_1$. 
	\end{itemize}
		Then the Lagrangian
			\eqnn
				\cone(Q) :=
				(\gamma_0 \times L_0) \bigcup Q|_{[t_0,t_1]} \bigcup (\gamma_1 \times L_1)
				\subset
				M \times T^*E^{N+1}
			\eqnd
		is a model for the (homotopy) kernel of $Q$. When we demand that every brane is equipped with a grading, the grading on $Q$, together with the zero grading on $\RR \subset T^*\RR \cong \overline{\CC}$, induces a grading on the kernel. Shifting the grading by $+1$ on the same underlying Lagrangian, we obtain the cokernel of $Q$. We hence have a fiber sequence $L_0 \xra{Q} L_1 \to \cone(Q)$, see~\cite{nadler-tanaka}. 
\end{construction}
		
		
\begin{example}
		Let $\gamma \subset T^*F$ be a connected, properly embedded smooth curve such that $\gamma \cap T^*(-\infty,t_0] = (-\infty,t_0]$ for some $t_0$, and such that $\gamma$ is equal to a vertical curve $\{t = const\}$ for $\tau >>0$ and for some $t > t_0$; we also assume $\gamma$ admits a compactly supported primitive. Then one can deform $L \times \gamma$ to be eventually conical, and the result is a morphism from $L$ to the empty Lagrangian: a zero morphism. 
		
		The kernel of this zero morphism is, by construction, linearly Hamiltonian isotopic to a grading shift of $L^{\dd}$; specifically, the composite $L \times T^*_0 E \to L \xra{\alpha-1/2} \RR$ defines the natural grading.  Equivalently, the equivalence $L \simeq L \times T^*_0 E$ in $\lag_\Lambda(M)$ is realized by a brane $L \times T^*_0 E$ with the grading $\alpha + 1/2$. In particular, the shift functor $L \mapsto L[1]$ is the same on objects as in the partially wrapped Fukaya category with graded branes. 
\end{example}
		
		Below are some useful results about this $\infty$-category. 

\begin{theorem}[\cite{tanaka-surgery}]\label{theorem. surgery cone}
		Let $L_0, L_1 \subset M \times T^*E^n$ be Lagrangian branes. Suppose that $L_0$ and $L_1$ intersect at a unique point, and the intersection is transverse. Then for every brane structure on $L_0$, there is a brane structure $\alpha$ on $L_1$, along with a fiber sequence
			\eqnn
				L_1^\alpha \to L_1^\alpha \sharp L_0 \to L_0
			\eqnd
		where the middle term is a Polterovich surgery.
\end{theorem}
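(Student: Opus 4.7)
The plan is to construct an explicit Lagrangian cobordism in $M \times T^*E^n \times T^*F$ whose three ends realize the desired fiber sequence, then verify it matches the mapping cone construction of Construction~\ref{construction. cone}. The strategy mirrors the usual Polterovich surgery, but performed parametrically in the $F$-direction.

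First I would set up local coordinates. By a Weinstein neighborhood theorem, a neighborhood of the transverse intersection point $p \in L_0 \cap L_1$ is symplectomorphic to a neighborhood of the origin in $T^*\RR^n \cong \CC^n$, with $L_0 \leftrightarrow \RR^n$ and $L_1 \leftrightarrow i\RR^n$. The Polterovich surgery $L_1 \sharp L_0$ is built by removing a small ball around $p$ and gluing in a Lagrangian handle parameterized by a profile curve $\gamma: \RR \to \CC$ with $\gamma(s) = s$ for $s \ll 0$ and $\gamma(s) = is$ for $s \gg 0$; the handle is $\{(\gamma(s)\, x_1,\ldots,\gamma(s)\, x_n) : s \in \RR,\ \sum x_k^2 = 1\}$ (appropriately cut off and glued). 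One needs $\gamma$ to be chosen so that the resulting Lagrangian is exact, which fixes the brane structure $\alpha$ on $L_1$ (a shift encoding the relative grading/primitive offset across the surgery arc, so the primitive matches $L_0$'s primitive at $p$).

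Next I would produce the cobordism $W \subset M \times T^*E^n \times T^*F$. Use a profile curve $\Gamma \subset T^*F$ with three asymptotic ends: a ray along the zero section at large $t$ collared by $L_0$, a vertical ray (in a cotangent fiber, with $\tau < 0$) collared by $L_1^\alpha$, and a horizontal ray along the zero section at small $t$ collared by $L_1^\alpha \sharp L_0$. Along this curve, perform a family of Polterovich surgeries varying from ``no surgery'' (two clean sheets $L_0 \cup L_1$) to ``fully surgered'' ($L_1 \sharp L_0$). Concretely, near the intersection locus one replaces the family $\{L_0 \cup L_1\} \times \Gamma$ with a one-parameter family of handles, with the parameter degenerating so that near the $L_1^\alpha$-end the handle pinches off to give $L_1^\alpha$ alone, while near the surgery end the family is constant at $L_1^\alpha \sharp L_0$. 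Exactness is the nontrivial global check: the local handle must extend to a Lagrangian $W$ with a globally defined compactly supported primitive, which dictates both the shift $\alpha$ on $L_1$ and the precise choice of $\Gamma$.

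Then I would verify that $W$, viewed as a morphism, realizes a mapping cone. The $\Lambda$-avoiding condition holds vacuously below $\tau \ll 0$ because $L_0, L_1$ themselves are $\Lambda$-non-characteristic and the handle is compactly supported. Comparing the asymptotic data of $W$ with Construction~\ref{construction. cone} applied to the morphism $L_1^\alpha \to L_1^\alpha \sharp L_0$ whose cone should be $L_0$: the vertical ray carrying $L_1^\alpha$ plays the role of $\gamma_0 \times L_0$ in that construction (with indices relabeled), the horizontal ray of $L_1^\alpha \sharp L_0$ plays the middle piece, and the horizontal ray of $L_0$ plays the ``cone'' end. A direct identification of collars, after a Hamiltonian isotopy straightening the profile curve, identifies $W$ with the cobordism produced by $\cone(\cdot)$, giving the fiber sequence.

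The main obstacle I anticipate is the global exactness of $W$ together with the compatibility of brane structures across the family: locally one can always perform surgery, but ensuring a compactly supported primitive on the parametric family (not just on each slice) forces a precise relationship between the gradings of $L_0$ and $L_1^\alpha$ and constrains the choice of profile curve $\Gamma$ and surgery handle. This is exactly the role played by the shift $\alpha$ in the statement, and checking that $\alpha$ exists (uniquely, up to the brane conventions of $\cB$) is the delicate computation that must be done in the chosen brane framework.
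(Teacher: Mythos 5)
First, a framing remark: this paper does not prove Theorem~\ref{theorem. surgery cone} at all---it imports it wholesale from \cite{tanaka-surgery}. So the comparison here is against the strategy of that reference, whose skeleton (realize the trace of the Polterovich surgery as an explicit Lagrangian cobordism in $M \times T^*E^n \times T^*F$, then compare with Construction~\ref{construction. cone}) your proposal does follow. However, two of your steps contain genuine errors. The first is the direction of the third end. You place $L_1^\alpha$ on a vertical ray with $\tau < 0$ and assert that the $\Lambda$-avoiding condition ``holds vacuously because $L_0, L_1$ themselves are $\Lambda$-non-characteristic.'' This is backwards. Nonzero objects of $\lag_\Lambda(M)$ always meet $\Lambda$: by Proposition~\ref{prop. zero objects}, a brane avoiding a neighborhood of the skeleton is a zero object, and in the application of Section~\ref{section. proof generation} the brane $L_1$ is a cocore or a linking disk, which intersects $\Lambda$. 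The non-characteristic condition of Definition~\ref{defn. non-characteristic cobordism} constrains precisely the locus $\tau \ll 0$ of a morphism, so a downward-pointing end collared by $L_1^\alpha$ disqualifies your $W$ from being a morphism in $\lag_\Lambda(M)$ at all. The trace must instead be bent so that its extra end points \emph{up}---upward vertical ends are unconstrained; this asymmetry is exactly what keeps $\lag_\Lambda(M)$ from being an $\infty$-groupoid---yielding, say, a legitimate morphism $Q\colon L_1^\alpha \to L_1^\alpha \sharp L_0$ whose upward end is collared by $L_0$.

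The second gap is the claimed ``direct identification'' with Construction~\ref{construction. cone}. The output of that construction has \emph{no} horizontal ends: both collared ends of the input morphism are capped by the downward curves $\gamma_0, \gamma_1$, producing an object of $M \times T^*E^{n+1}$ all of whose noncompact directions are vertical. Your $W$ has two horizontal ends, so no relabeling of collars identifies it with $\cone(\cdot)$. What must actually be proven---and this is the substantive content of \cite{tanaka-surgery}---is that the capped object $\cone(Q)$, which away from the surgery region consists roughly of $L_1$ times a U-shaped curve (both ends pointing down) glued to $L_0$ times a curve running from $\tau = -\infty$ up to $\tau = +\infty$, is equivalent in $\lag_\Lambda(M)$ to a stabilization and shift of $L_0$. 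That equivalence is built from eventually linear Hamiltonian isotopies pushing the U-shaped piece off the skeleton, combined with the equivalence and zero-object criteria (Remark~\ref{remark. Ham isotopies are equivalences away from Lambda}, Proposition~\ref{prop. zero objects}); it is not a collar-matching. Relatedly, your description of the trace as a one-parameter family of surgeries whose handle ``pinches off'' at one end is a singular degeneration, not an embedded Lagrangian; the embedded three-ended local model is precisely what replaces that naive family. Your closing point---that global exactness of the trace pins down the brane shift $\alpha$---is correct, but it is secondary to these two structural issues.
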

		
		(That is, Polterovich surgery ---also known as Lagrangian surgery---induces mapping cone sequences.)

\begin{prop}[\cite{nadler-tanaka}]\label{prop. zero objects}
			Let $L \subset M \times T^*E^n$ be a brane such that, for some neighborhood $U$ of $\sk(M) \times E^n$, we have that $L \cap U = \emptyset$. Then $L$ is a zero object in $\lag(M)$.
\end{prop}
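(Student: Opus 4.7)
The plan is to prove $L \simeq \emptyset$ in $\lag_\Lambda(M)$. Since the empty Lagrangian is patently a zero object (a cobordism into or out of $\emptyset$ is determined by data on only one side, so the corresponding mapping spaces are contractible by routine isotopy arguments in the remaining half-cylinder), this will suffice. The key geometric input is that, because $L$ avoids a neighborhood $U$ of $\sk(M) \times E^n$, one can push the compact core part of $L$ into the conical end of $M \times T^*E^n$ without ever crossing the skeleton, hence without ever meeting $\Lambda$.

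First, I would pick a smooth cutoff $\chi$ equal to $1$ on (a neighborhood of) the compact part of $L$ and vanishing on $U$, and flow along $\chi \cdot Z$, a truncated Liouville-like vector field. Since $\chi Z$ is outward-pointing on $L$ and vanishes near the skeleton, after some finite time $s_1$ the image $L' := \psi_{s_1}(L)$ has its compact core pushed entirely into the conical end of $M \times T^*E^n$. Via the standard trace-of-Hamiltonian-isotopy construction (applied to a primitive-corrected Hamiltonian lift of $\chi Z$), this produces an exact Lagrangian cobordism $Q_1 : L \to L'$ in $M \times T^*E^n \times T^*F$ that remains outside $U$ throughout and is thus $\Lambda$-non-characteristic.

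Second, because $L'$ has empty compact core, I construct $Q_2 : L' \to \emptyset$ as the product $L' \times \gamma$, where $\gamma \subset T^*F$ is an exact, eventually conical curve equal to the zero section for $t \leq t_0$ and asymptoting to the vertical ray $\{t = t_{1/2}\}$ as $\tau \to +\infty$. For $t \geq t_{1/2}$ both $\gamma$ and $Q_2$ are empty, so $Q_2$ validly terminates at $\emptyset$. Concatenating $Q_1$ with $Q_2$ yields a morphism $L \to \emptyset$; reversing the Liouville-push direction yields a morphism $\emptyset \to L$. A final $2$-cobordism, built from a $1$-parameter family of exact curves in $T^*F$ interpolating between the zero section and the ``two-finger'' curve $\gamma \cup \gamma^{\text{rev}}$, combined with a simultaneous Liouville-flow isotopy of $L$ absorbing the action corrections, identifies the composites with the appropriate identities and hence upgrades these morphisms to an equivalence $L \simeq \emptyset$.

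The main obstacle is exactness throughout the $2$-cobordism: the interpolating curves in $T^*F$ create vertical fingers whose enclosed symplectic area is generically nonzero, potentially spoiling the vanishing of primitives at infinity. The skeleton-avoidance hypothesis is precisely the source of the geometric freedom that rescues the construction: since $L$ lives in a free region of $M \times T^*E^n$, disjoint from $\sk(M) \times E^n$, one can perform a simultaneous Liouville-like isotopy of $L$ in this region to compensate each action change, keeping the total primitive vanishing at infinity throughout. Carrying out this coupled curve-and-Lagrangian deformation cleanly---while respecting eventual conicity, the collaring condition, and $\Lambda$-non-characteristic behavior in $\tau \ll 0$---is the technical heart of the argument.
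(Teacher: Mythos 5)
Your overall strategy---produce morphisms $L \to \emptyset$ and $\emptyset \to L$ as products of $L$ with exact curves in $T^*F$, then upgrade to an equivalence via a product $2$-cobordism, with the hypothesis $L \cap U = \emptyset$ supplying the $\Lambda$-avoidance---is the right one, and it is essentially the argument behind the cited result of~\cite{nadler-tanaka} (the paper itself gives no proof, only the citation; but all the needed ingredients are recalled in the paper: the zero-morphism example after Construction~\ref{construction. cone}, the flip-and-rotation homotopy of Remark~\ref{remark. Ham isotopies are equivalences away from Lambda} and Figure~\ref{figure. inverse}, and the fact that $\emptyset$ is a zero object). The key simplification you are missing is that {\em every} Lagrangian of the form $L \times (\text{subset of } T^*F^k)$ avoids $\Lambda$ outright when $L$ does, so the non-characteristic condition is vacuous for every cobordism and homotopy in sight; in particular one never needs to move $L$ at all, and your $Q_2$ (an upward escape, $\tau \to +\infty$) is just the paper's zero morphism, which exists for {\em every} object because the $\Lambda$-avoidance constraint only concerns $\tau \ll 0$.

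Two of your steps fail as written. First, Step 1: the flow of $\chi \cdot Z$ is not even symplectic, let alone exact---$\cL_{\chi Z}\omega = d\chi \wedge \theta + \chi\,\omega$ is not proportional to $\omega$---so $\psi_{s_1}(L)$ need not be a Lagrangian; and since $Z$ is not a Hamiltonian vector field, there is no ``Hamiltonian lift of $\chi Z$'' to feed into the trace construction. Fortunately this step is also unnecessary, for the reason above. Second, your resolution of the exactness issue in the final $2$-cobordism is misdirected. For a product $L \times \Sigma$ the primitive splits as a sum $f_L \oplus f_\Sigma$; no isotopy of the $L$ factor can cancel a nonzero locally constant value of $f_\Sigma$ along a vertical end at infinity. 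The correct (and standard) fix lives entirely in the $T^*F$ factors: choose the curves, and the interpolating surface, so that the relevant signed areas balance and hence their own primitives vanish both along the collars and where $|\tau| \gg 0$---exactly the condition imposed on the curves $\gamma_0, \gamma_1$ in Construction~\ref{construction. cone}. A smaller quibble: that $\emptyset$ is a zero object is not ``patent''---the space of cobordisms out of $\emptyset$ contains arbitrarily complicated $\Lambda$-avoiding branes escaping to $\tau = \pm\infty$, and its contractibility is part of the main theorem of~\cite{nadler-tanaka}---so cite it rather than wave at it.
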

		
		(That is, objects are ``supported'' along $\Lambda$.)
		
		Finally, wrappings are not a necessary component of defining the $\infty$-category of Lagrangian cobordisms; but wrapping is a useful way of producing equivalences:

\begin{defn}
				A Hamiltonian $H: M \to \RR$ is called {\em eventually linear} if $Z(H) = H$ near infinity.
	\end{defn}

\begin{prop}
				Let $H(-,-): \RR \times M \to \RR$ be a time-dependent function on $M$ such that for every $t \in \RR$, $H(t,-)$ is eventually linear, and such that $H_t = 0$ for $|t|>>0$. Then for any exact, eventually conical Lagrangian $L$, one has a Lagrangian cobordism from $L$ to the flow of $L$ at time $t>>0$ as follows:
					\eqnn
						L \times \RR \to M \times T^*F,
						\qquad
						(x,t) \mapsto (\flow_t^{X_H}(x), t, H_t(\flow_t^{X_H}(x)))
					\eqnd
	\end{prop}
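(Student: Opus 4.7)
The plan is to recognize the given map $\Phi(x,t) := (\flow_t^{X_H}(x),\, t,\, H_t(\flow_t^{X_H}(x)))$ as the classical Lagrangian suspension construction adapted to the Liouville setting, and to verify the three defining properties of a Lagrangian cobordism from $L$ to $\flow_T^{X_H}(L)$ (for any $T$ with $H_s \equiv 0$ outside $[-T,T]$): exact embedded Lagrangianness with a primitive vanishing at infinity, the collaring condition of Definition~\ref{defn. non-characteristic cobordism}, and eventual conicality in $M \times T^*F$. Injectivity of $\Phi$ is immediate, since $\flow_t^{X_H}$ is a diffeomorphism of $M$ for each $t$ and the $F$-coordinate is recorded directly.

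For the Lagrangian condition and the primitive, I would pull back the Liouville form $\theta_M + \tau\,dt$ on $M \times T^*F$ by $\Phi$ to obtain $(\flow_t^{X_H})^*\theta_M + H_t(\flow_t^{X_H}(x))\,dt$. The standard identity $\tfrac{d}{ds}(\flow_s^{X_H})^*\theta_M = (\flow_s^{X_H})^*\, d\!\left(\iota_{X_{H_s}}\theta_M - H_s\right)$ integrates to produce an explicit function $S_t$ with $dS_t = (\flow_t^{X_H})^*\theta_M - \theta_M$, and $S_t$ is compactly supported in $t$ because $H_s$ vanishes for $|s|\gg 0$. Adding to it the given primitive $f_L$ of $\theta_M|_L$ yields an exact primitive on $\Phi(L \times \RR)$ that also vanishes at infinity in $M$, since both $f_L$ and the integrand defining $S_t$ do. Exactness simultaneously demonstrates that $\Phi(L\times\RR)$ is Lagrangian and that it carries a primitive of the type required by our brane convention.

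Collaring follows because $H_t\equiv 0$ for $|t|>T$ forces $\flow_t^{X_H} = \id$ for $t\leq -T$ and $\flow_t^{X_H}=\flow_T^{X_H}$ for $t\geq T$, and simultaneously kills the $\tau$-component; so $\Phi$ restricts to $L\times (-\infty,-T]$ on the negative end and to $\flow_T^{X_H}(L) \times [T,\infty)$ on the positive end, both inside the zero section of $T^*F$. For eventual conicality in $M\times T^*F$, the eventually linear hypothesis $Z_M(H_t) = H_t$ at infinity in $M$ is equivalent to $[Z_M, X_{H_t}]=0$ there, so $\flow_t^{X_H}$ commutes with the $M$-Liouville flow near infinity in $M$; combined with the fact that outside $|t|>T$ the image sits in the zero section of $T^*F$ (and is therefore automatically invariant under the $T^*F$-Liouville flow), this shows $\Phi(L\times\RR)$ is closed under the Liouville flow of $M\times T^*F$ outside a compact set. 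The main obstacle is essentially bookkeeping: the two hypotheses on $H$ (compact support in $t$ and eventual linearity in $M$) must be used in tandem to ensure that both the constructed primitive and the image itself behave correctly at \emph{both} infinities of $M \times T^*F$; neither hypothesis alone suffices. Any additional tangential data (grading, relative Pin structure) transports along the isotopy by the usual graph-of-flow construction and presents no further difficulty.
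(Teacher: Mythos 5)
The paper offers no proof of this proposition: it is recorded as the classical Lagrangian suspension construction, with the relevant conventions fixed in the adjacent remarks. So your proposal has to be measured against the standard suspension argument, and your overall decomposition is exactly the right one: injectivity, exactness via the identity $\tfrac{d}{ds}(\flow_s^{X_H})^*\theta_M = (\flow_s^{X_H})^*\,d\bigl(\iota_{X_{H_s}}\theta_M - H_s\bigr)$, collaring from the compact $t$-support of $H$, and eventual conicality from eventual linearity (via $[Z,X_{H_t}]=0$ near infinity, with the scaling $H_t\circ\flow^Z_u = e^u H_t$ needed in tandem so that the fiber coordinate scales correctly under $Z_M+\tau\partial_\tau$).

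However, the central computation has a genuine slip. The pullback of the Liouville form is not $(\flow_t^{X_H})^*\theta_M + H_t(\flow_t^{X_H}(x))\,dt$: because the $M$-component of $\Phi$ depends on $t$, there is an additional term $\theta_M(X_{H_t})(\flow_t^{X_H}(x))\,dt$. Restoring it, the $dt$-component of $\Phi^*(\theta_M+\tau\,dt)$ is $\bigl[\theta_M(X_{H_t}) + H_t\bigr]\circ\flow_t^{X_H}$, while your candidate primitive $f_L+S_t$ has $dt$-derivative $\partial_t S_t = \bigl[\theta_M(X_{H_t}) - H_t\bigr]\circ\flow_t^{X_H}$; these differ by $2H_t\circ\flow_t^{X_H}$, which does not vanish on $L$ in general. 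So the exactness check (equivalently, the Lagrangian condition itself) does not close as written: with the conventions literally recorded in the paper ($\omega=\sum dp_i\wedge dq_i$ and $\omega(-,X_H)=dH$), the suspension is Lagrangian and exact only if the fiber coordinate is $-H_t(\flow_t^{X_H}(x))$, or equivalently one keeps $+H_t$ but uses the opposite convention $\omega(X_H,-)=dH$. This sign-sensitivity is precisely the substantive content of the verification, and your sketch passes over it only because of the dropped term. Smaller slips: $S_t$ is eventually \emph{constant} in $t$, not compactly supported; under the normalization $\flow_0=\id$ one has $\flow_t=\flow_{-T}$ (not $\id$) for $t\le -T$, so the negative end is collared by $\flow_{-T}(L)$, a harmless reparametrization; and the vanishing of the integrand $\theta_M(X_{H_s})-H_s$ near infinity in $M$ deserves its one-line justification, namely $\theta_M(X_{H_s}) = \omega(Z,X_{H_s}) = Z(H_s) = H_s$ there, which is another point where eventual linearity is used.
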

			
\begin{remark}
			As usual, the Hamiltonian vector field $X_H$ is defined so that $\omega(-,X_H) = dH$. 
	\end{remark}

\begin{remark}\label{remark. Ham isotopies are equivalences away from Lambda}
			If the value of $H_t(x)$ is very positive or very negative, the ``eventually linear'' assumption implies that $x$ is far from the core of $M$. Thus so long as $H$ vanishes in a neighborhood of the stops in $\del_\infty M$, this cobordism is $\Lambda$-avoiding, and in fact, an equivalence, because it is bounded away from $\Lambda$ at $dt \to \infty$ too. (See Section~2.6 of~\cite{tanaka-exact}.) For example, Figure~\ref{figure. inverse} exhibits the higher cobordism showing that a composition of a morphism $Q$, and its ``$dt \mapsto -dt$'' flip $\overline{Q}$, compose to be higher-cobordant to the identity cobordism. Note that when $Q$ is vertically bounded (i.e., has bounded $dt$ coordinate), or avoids $\Lambda$ in both the $+\infty dt$ and $-\infty dt$ directions, then $\overline Q$ is indeed a morphism in $\lag_\Lambda(M)$, and the depicted higher cobordism is indeed a homotopy in $\lag_\Lambda(M)$. 
	\end{remark}

        \begin{figure}[h]
        		\[
        			\xy
        			\xyimport(8,8)(0,0){\includegraphics[width=4in]{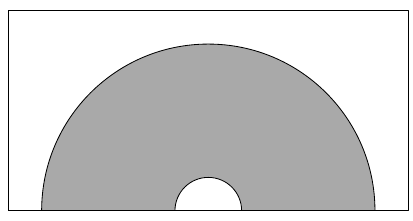}}
					,(2,1)*+{Q}
					,(6,1)*+{\overline{Q}}
					,(1,4)*+{L_0}
					,(7,4)*+{L_0}
					,(4,1)*+{L_1}
					,(4,8)*+{\id_{L_0}}					
        			\endxy
        		\]
        \caption{
        A homotopy showing $\overline Q \circ Q \sim \id_{L_0}$. A rotation in the other direction shows $Q \circ \overline{Q} \sim \id_{L_1}$. Thus, if $Q: L_0 \to L_1$ is vertically bounded, $Q$ is an equivalence.
        }
        \label{figure. inverse}
        \end{figure}

			Thus we have
			
\begin{prop}
				If two branes $L_0$ and $L_1$ in $\lag_\Lambda(M)$ are related by eventually linear Hamiltonian isotopies avoiding $\ff$, then $L_0$ and $L_1$ are equivalent objects. 
	\end{prop}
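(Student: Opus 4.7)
The plan is to build an explicit Lagrangian cobordism $Q \colon L_0 \to L_1$ from the given Hamiltonian isotopy using the construction of the immediately preceding proposition, and then to show this $Q$ is an equivalence by the rotation argument illustrated in Figure~\ref{figure. inverse}. First I would package the hypothesis into a single time-dependent Hamiltonian $H \colon \RR \times M \to \RR$ whose time-$1$ flow carries $L_0$ to $L_1$; after a smooth reparameterization in the $t$-variable I can assume $H_t \equiv 0$ for $|t| \gg 0$ and that $H_t$ is eventually linear and vanishes in a fixed neighborhood of $\ff$ for every $t$. Applying the preceding proposition then produces the brane
\eqnn
Q = \{(\flow_t^{X_H}(x),\, t,\, H_t(\flow_t^{X_H}(x))) : x \in L_0,\, t \in F\}
\subset M \times T^*E^N \times T^*F,
\eqnd
which by construction is collared by $L_0$ for $t \ll 0$ and by $L_1$ for $t \gg 0$.

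Next I would verify that $Q$ is indeed a morphism in $\lag_\Lambda(M)$, i.e.\ that it is $\Lambda$-non-characteristic. The $\tau$-coordinate of $Q$ is $H_t(\flow_t^{X_H}(x))$; by the eventually linear hypothesis, $\tau \to \pm\infty$ only when the underlying $M$-point escapes any compact set along the Liouville flow, and since $H_t$ vanishes near $\ff$ this escape happens away from $\ff \subset \del_\infty M$. Combined with the fact that the $M$-component of $Q$ is a Hamiltonian image of $L_0$ (hence avoids $\core(M)$ wherever $L_0$ does), one checks that $Q$ avoids $\Lambda$ not only for $\tau \ll 0$ but in fact for $|\tau| \gg 0$ in \emph{both} directions. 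This is exactly the hypothesis of Remark~\ref{remark. Ham isotopies are equivalences away from Lambda}.

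For the inverse, I would apply the same construction to the reversed Hamiltonian $\tilde H_t := -H_{-t}$ (pre-composed with the time-$1$ flow of $H$, if one wants to start at $L_1$), producing a morphism $\ov{Q} \colon L_1 \to L_0$ that is likewise $\Lambda$-avoiding in both $\pm\tau$ directions. The key geometric step is then the rotation argument of Figure~\ref{figure. inverse}: because $Q$ is non-characteristic in both the $+\tau$ and $-\tau$ directions, I can realize the homotopy $\ov{Q}\circ Q \simeq \id_{L_0}$ as a higher cobordism in $M \times T^*E^N \times T^*F \times T^*F'$ obtained by rotating the concatenation of $Q$ and $\ov{Q}$ in the $T^*F$-plane toward the constant cobordism $L_0 \times F$; the symmetric rotation gives $Q \circ \ov{Q} \simeq \id_{L_1}$. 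This produces the required $2$-simplex data certifying $Q$ as an equivalence in $\lag_\Lambda(M)$.

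The main obstacle I anticipate is making precise the claim that the above rotation genuinely sweeps out a $\Lambda$-non-characteristic higher cobordism — one must track, throughout the rotation, that no new intersections with $\Lambda$ are created at the $\tau \to -\infty$ boundary of any intermediate $T^*F$-slice. This is precisely where the two-sided $\Lambda$-avoidance of $Q$ matters, and the cleanest way to dispatch it is to cite Section~2.6 of~\cite{tanaka-exact}, where the analogous rotation construction is carried out in detail; our only additional input is the verification in the previous paragraph that the Hamiltonian-generated $Q$ satisfies the hypotheses used there.
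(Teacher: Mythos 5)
Your proposal is correct and follows essentially the same route as the paper, whose proof of this proposition is exactly the combination of the preceding suspension construction with Remark~\ref{remark. Ham isotopies are equivalences away from Lambda}: eventual linearity plus vanishing near $\ff$ makes the suspension cobordism $\Lambda$-avoiding in both $\pm\tau$ directions, and the rotation homotopy of Figure~\ref{figure. inverse} (Section~2.6 of~\cite{tanaka-exact}) then certifies it as an equivalence. Your construction of the inverse from the reversed Hamiltonian $-H_{-t}$ is merely a repackaging of the paper's ``$dt \mapsto -dt$'' flip $\overline{Q}$.
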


\begin{example}\label{example. vertical Hamiltonians from positive flows}
			Let $V$ be a contact vector field on $\del_\infty M$, meaning 
				$(\flow^V_t)^*\theta|_{\del_\infty M} = f_t \theta|_{\del_\infty M}$
			for some smooth, nowhere vanishing function $f_t: \del_\infty M \to \RR$. Choosing a Liouville flow coordinate $s$ near infinity, so that we have the identification
				$
					\lambda = e^s \lambda|_{\del_\infty M}
				$
			let us define $H = e^s \lambda|_{\del_\infty M}(V)$ near infinity, and extend $H$ arbitrarily into the interior of $M$. Then $H$ is eventually linear. 
			Our main source of contact vector fields will be those extended from Legendrian isotopies. 
	\end{example}

	\subsection{Attaching handles exactly}\label{section. handles}
	
		We refer the reader to \cite{gps2} for the notion of attaching an {\em exact} embedded Lagrangian $k$-handle; we recall some basics here. As before we will let $M^o \subset M$ denote a compact set given by the complement of some cylindrical end.
		
		If $L \subset M$ is a brane which is eventually conical with Legendrian boundary $A = \del_\infty L \subset \del_\infty M$, let $U \subset \del_\infty M$ be a Darboux-Weinstein neighborhood of $A$---then $U$ is contactomorphic to a neighborhood of the zero section of the jet bundle $J^1(A) \cong \RR_z \times T^*A$. The symplectization $\RR_{s>0} \times U \subset M$ is isomorphic to a neighborhood of the zero section of the cotangent bundle $T^*(\RR_{s>0} \times A)$. 
		
		The result of attaching an index $k$ exact embedded Lagrangian handle is a Lagrangian submanifold $\LL_1 \subset \RR_{s>0} \times U$, collared by $A$ at small $s$, and collared by a Legendrian $A'$ at larger $s$, where $A'$ is topologically the result of an index $k$ surgery of $A$. The union $\LL := (L \cap M^o) \bigcup_A \LL_1$ is an eventually conical Lagrangian brane inside $M$, and we call $\LL$ the result of attaching an {\em exact} embedded Lagrangian $k$-handle to $L$. 
		
		We emphasize that $\LL_1$ is contained entirely in $\RR_{s>0} \times U \subset M$. Moreover, if $L$ avoids $\Lambda$ and $\ff$ at infinity, then $\LL_1$ also avoids $\Lambda$ and $\ff \times \RR_s$ entirely. Finally, $\LL_1$ admits a primitive which vanishes at $A$ and at $A'$. (This is the reason for the adjective {\em exact} for this handle.)
		
\begin{remark}
		$\LL_1$ is topologically an elementary cobordism, and has been naturally called a Lagrangian cobordism from $A$ to $A'$ in the literature (see for example~\cite{dr}). Note that $\LL_1$ ``propagates'' in the conical direction of $\RR_s$, within the Liouville manifold $M$ itself; in particular it is not a Lagrangian cobordism in our sense, and does not represent a morphism in our $\infty$-category. It is simply a subset of a new object $\LL$.
\end{remark}
		
		In the partially wrapped setting, the Viterbo restriction functor, and the corestriction functor of completing Liouville subdomains, yield the following:
		
\begin{prop}\label{prop. handles dont change objects in fukaya}
		$L$ and $\LL$ are equivalent objects in the partially wrapped Fukaya category.
\end{prop}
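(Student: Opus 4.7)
The plan is to reduce to the main structural results of \cite{gps2} on partially wrapped Fukaya categories of Liouville subdomains. The key geometric input is that the handle piece $\LL_1$ is entirely contained in the cylindrical neighborhood $V := \RR_{s>0} \times U$ of the Legendrian $A$, that $\LL_1$ admits a primitive vanishing at $A$ and at $A'$, and that $\LL_1$ does not intersect the stop $\ff$.

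First, I would isolate the region where $L$ and $\LL$ differ. Choose a slightly enlarged Liouville domain $M^o \subset M' \subset M$ whose cylindrical collar ends before reaching the support of $\LL_1$, so that $L \cap M' = \LL \cap M'$ as branes. Under the Viterbo restriction functor $\cW(M) \to \cW(M')$ of \cite{gps2}, both $L$ and $\LL$ map to the same object, namely the common restriction. This by itself is not sufficient to conclude, since Viterbo restriction need not be faithful, but it tells us precisely where the discrepancy lies: in the behavior at infinity inside the complementary handle region.

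To promote the local agreement to an equivalence in $\cW(M)$, I would next realize the passage from $L$ to $\LL$ as a Reeb wrapping that stays disjoint from $\ff$. Because $\LL_1$ is an exact embedded Lagrangian handle, there is an eventually linear Hamiltonian $H$ on $M$, supported near infinity and vanishing in a neighborhood of $\ff$, whose time-one flow carries $L$ over the handle attachment to a brane Hamiltonian-isotopic to $\LL$; the vanishing primitive property of $\LL_1$ is what allows $H$ to be cooked up with support in $V$ and avoiding $\ff$. By the standard fact (Section~2.6 of~\cite{tanaka-exact}, and its wrapped-category analogue in~\cite{gps2}) that positive, stop-avoiding, eventually linear Hamiltonian isotopies represent equivalences in the partially wrapped Fukaya category, this yields $L \simeq \LL$ in $\cW(M)$.

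The main obstacle is Step two: constructing the Hamiltonian $H$ explicitly, or equivalently invoking the machinery of~\cite{gps2} for Lagrangian handle attachments in the stopped setting. A cleaner alternative, if available, is to appeal directly to the result that an exact embedded Lagrangian cobordism between Legendrians $A \leadsto A'$ (disjoint from the stop) induces an equivalence between the associated wrapped objects; $\LL_1$ is exactly such a cobordism by construction, so the proposition follows. Either route ultimately rests on the interplay between the Viterbo restriction–corestriction adjunction and the invariance of wrapped objects under stop-avoiding wrappings, both of which are the core technical inputs of~\cite{gps2}.
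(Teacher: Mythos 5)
Your Step 2 contains a fatal error. You propose an eventually linear Hamiltonian $H$ whose time-one flow ``carries $L$ over the handle attachment to a brane Hamiltonian-isotopic to $\LL$.'' No such $H$ can exist in general: the time-one flow of any Hamiltonian is an ambient symplectomorphism of $M$, so it carries $L$ diffeomorphically onto its image, whereas attaching an index-$k$ exact embedded Lagrangian handle changes the diffeomorphism type of the Lagrangian --- $\del_\infty \LL = A'$ is the result of an index-$k$ surgery on $A = \del_\infty L$, and $\LL = (L \cap M^o) \cup_A \LL_1$ is in general not diffeomorphic to $L$; that is the whole point of the operation. This is exactly why the proposition has content: $L$ and $\LL$ are equivalent as objects of the partially wrapped Fukaya category without being Hamiltonian isotopic, or even diffeomorphic, so the equivalence must be produced Floer-theoretically rather than by an isotopy. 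The paper's own proof is a citation to Section~1.8 of \cite{gps2}, where the equivalence comes from the Viterbo restriction and corestriction functors; and the reason the present paper needs the separate, delicate Lemma~\ref{lemma.attachment} in the cobordism setting is precisely that no isotopy argument is available there either --- the equivalence is realized by a genuinely nontrivial (vertically bounded) cobordism obtained by rotating the conical direction $\RR_s$ into the cobordism direction $\RR_t$. Do not conflate this with the \emph{non-exact} handle attachment along a Reeb chord (Proposition~1.27 of \cite{gps2}, invoked in Lemma~\ref{lemma. surgery by linking disk}): that operation \emph{is} realized by a wrapping isotopy, consistent with the fact that it preserves the topology of the Lagrangian; the exact handle attachment does not.

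Your fallback in the final paragraph is circular: ``an exact embedded Lagrangian cobordism between Legendrians, disjoint from the stop, induces an equivalence between the associated wrapped objects'' is not an independently available result to appeal to --- it is a restatement of Proposition~\ref{prop. handles dont change objects in fukaya} itself, and proving it is exactly what Section~1.8 of \cite{gps2} accomplishes via restriction/corestriction. Your Step 1 (that $L$ and $\LL$ have a common Viterbo restriction, and that this alone is insufficient since restriction need not be faithful) is correct and matches the paper's framing, but it is never used to close the argument. As written, the proposal rests either on a geometric claim that is false or on assuming the statement to be proved.
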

		
\begin{proof}
		See Section~1.8 of \cite{gps2}.
\end{proof}
				
		The following is the analogue of Proposition~\ref{prop. handles dont change objects in fukaya} in the setting of Lagrangian cobordisms. We do not have, as far as we know, a Viterbo restriction functor for the $\infty$-category of Lagrangian cobordisms, so we present an alternate proof by constructing an explicit equivalence.

\begin{lemma}\label{lemma.attachment}
		There is an equivalence $\LL \to L$ in $\lag(M)$.
\end{lemma}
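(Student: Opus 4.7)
The plan is to exhibit an explicit Lagrangian cobordism $W \subset M \times T^*F$ collared by $\LL$ at $t \to -\infty$ and by $L$ at $t \to +\infty$, and to check that $W$ is vertically bounded, so that Remark~\ref{remark. Ham isotopies are equivalences away from Lambda} upgrades it to an equivalence in $\lag_\Lambda(M)$. Since $\LL$ and $L$ coincide on the Liouville domain $L \cap M^o$, differing only in the collar $\RR_{s>0}\times U$ (where $\LL$ contains the handle $\LL_1$ and its trailing continuation over $A'$, while $L$ is the plain cylinder $\RR_{s>0}\times A$), I would take $W$ to be the trivial product $(L \cap M^o)\times F$ on the compact piece, reducing the problem to a local construction inside $\RR_{s>0}\times U \times T^*F$.

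For this local piece, my strategy is to interpolate via the Liouville flow in the $s$-direction, which translates the handle outward while allowing the cylinder over $A$ to fill in behind it. Choose a smooth monotone function $\rho: F \to [0,\infty)$ with $\rho(t) = 0$ for $t \le t_0$ and $\rho(t) \to \infty$ as $t \to t_1$, and realize the time-$\rho(t)$ Liouville flow as the Hamiltonian suspension (in the sense of Example~\ref{example. vertical Hamiltonians from positive flows}) of an eventually linear, time-dependent Hamiltonian $H_t$ supported in the collar. This produces a Lagrangian whose $t$-slice is the smooth gluing of three pieces: a growing trivial cylinder $(0, s_0 + \rho(t)]\times A$, the translated handle $\flow^Z_{\rho(t)}(\LL_1)$, and a trailing $A'$-cylinder past $s_1 + \rho(t)$. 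Because $\LL_1$ carries a primitive vanishing at $A$ and $A'$ and the Liouville flow acts conformally on $\theta$, each slice is exact with primitive vanishing at its Legendrian ends, and the primitive on the global $W$ vanishes at infinity. To cap off the construction in finite $t$, I bend the trailing $A'$-cylinder into $T^*F$ by a vertical curve for $t$ near $t_1$, sending the $A'$-end into the $\tau$-direction of $T^*F$; past $t = t_1$, the cobordism is collared with $L \times [t_1',\infty)$.

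The main obstacle is smoothly gluing the Liouville-suspended piece to the $T^*F$-bent piece while preserving exactness and the vanishing of the primitive at infinity. The key is that $\LL_1$'s primitive vanishes at $A'$, so the gluing locus is a common zero of all relevant potentials; the bending into $T^*F$ can then be realized as the graph of $-df$ for a function $f$ vanishing near the gluing locus, keeping exactness intact. Once this is set up, $\Lambda$-avoidance follows because the whole nontrivial part of $W$ is confined to the collar $\RR_{s>0}\times U$, which by construction avoids $\Lambda$ and $\ff$ at infinity (the handle is attached in such a neighborhood by hypothesis), and the bent $A'$-end sits in this same $\Lambda$-free region even as $\tau \to +\infty$. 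Applying Remark~\ref{remark. Ham isotopies are equivalences away from Lambda}, together with the analogous construction running time in the opposite direction (to produce a homotopy inverse), then delivers the claimed equivalence.
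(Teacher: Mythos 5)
Your overall logical frame is fine: a cobordism from $\LL$ to $L$ that is either vertically bounded or avoids $\Lambda$ in both the $\tau \to +\infty$ and $\tau \to -\infty$ directions would indeed be an equivalence by Remark~\ref{remark. Ham isotopies are equivalences away from Lambda}, and localizing the problem to the collar $\RR_{s>0}\times U$ is also how the paper proceeds. The gap is in the construction of $W$ itself, and it is genuine. First, the Liouville flow is not Hamiltonian ($Z$ satisfies $\cL_Z\omega = \omega \neq 0$), so it cannot be realized as a Hamiltonian suspension in the sense of Example~\ref{example. vertical Hamiltonians from positive flows}; this is repairable for \emph{finite} flow times, since $\rho \mapsto \flow^Z_\rho(\LL_1)$ is an exact Lagrangian isotopy, but the generating Hamiltonian and the primitive then scale like $e^\rho$. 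This makes your requirement $\rho(t)\to\infty$ as $t\to t_1$ fatal: the suspension's $\tau$-coordinate at time $t$ is the value of $H_t$ along the flow, so as $t \to t_1$ the $\tau$-coordinates and the primitive blow up, non-conical handle material appears at arbitrarily large $s$ outside every compact set, and the union over $t<t_1$ does not close up to a smooth, properly embedded, eventually conical Lagrangian satisfying the strict collaring $W|_{t\ge t_1} = L\times[t_1,\infty)$ demanded by Definition~\ref{defn. non-characteristic cobordism}. (Note also that the suspension proposition in the paper requires $H_t = 0$ for $|t|\gg 0$.) An infinite Liouville push cannot be compressed into finite cobordism time.

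Second, the step ``cap off by bending the trailing $A'$-cylinder into the $\tau$-direction'' is precisely where the difficulty lives, and your proposal does not resolve it. A vertical end of a morphism must be collared by an honest brane---a boundaryless, properly embedded, eventually conical Lagrangian---times a vertical ray. The material you want to send to $\tau = +\infty$ (the handle together with its $A'$-cylinder) has boundary $A$, along which it is attached to the $A$-cylinder that must survive as part of $L$; no brane is available to collar such an end, and a graph-of-$(-df)$ deformation with $f$ vanishing near the gluing locus produces only bounded $\tau$-perturbations---it can neither sever this attachment nor create a vertical end. The paper's proof sidesteps all of this by rotating the conical $s$-direction into the \emph{base} $t$-direction of $T^*F$, not the fiber $\tau$-direction: one applies the symplectomorphism $\Phi$ induced by the quarter-disk embedding $\phi$ of Figure~\ref{figure. phi} to $\LL_1 \times \RR_t$ and fills in the complement of its image with the trivial cylinder $A \times V$. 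The handle never ``escapes''; it is absorbed into the cobordism itself, and the result is smooth, exact, correctly collared, and vertically bounded---exactly the properties your construction fails to deliver (and the vertical boundedness is used again later, in the proof of Theorem~\ref{thm:geometric-version}).
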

		
\begin{proof}
		We utilize the isomorphism $\RR_s \times J^1(A) \cong T^*(\RR_s \times A)$---this can be realized, for example, by
			\eqnn
			T^*(\RR_s \times A) \to \RR_s \times J^1(A),
			\qquad
			(s, a, \sigma, \alpha)
			\mapsto
			(\log s, a, \alpha/s, \sigma)
			\eqnd
		where $s \in \RR_s, a \in A, \sigma \in T^*\RR_s, \alpha \in T^*A$, and the last coordinate of $(\log s, a, \alpha/s, \sigma)$ is the 1-jet direction. (See Section~4 of~\cite{dr}, or Section~3 of~\cite{gps2} for a different map.) 
		
		Note that, crucially, one can choose a symplectomorphism as above so that a brane in $\RR_s \times J^1(A)$ is exact whenever it is exact in $T^*(\RR_s \times A)$ under the standard Liouville structures on both.
		
		Let $F = \RR_t$ (i.e., we will write an element of $F$ by $t$), and consider the Lagrangian $\LL_1 \times \RR_t \subset T^*(\RR_s \times A \times \RR_t)$, and for appropriate choices of real numbers $S, T >0$, let $\phi$ be a smooth embedding 
			$(0,S)_s \times (0,T)_t \to (0,S)_s \times (0,S)_t  \subset \RR_s \times \RR_t$ 
		as depicted in Figure~\ref{figure. phi}; informally, $\phi$ is a collared version of an orientation-reversing rotation of $\RR_s$ into $\RR_t$. We let $\Phi$ denote the induced symplectic embedding $T^*((0,S) \times A \times (0,T)) \to T^*(\RR_s \times A \times \RR_t)$. 
		
		Because $\LL_1$ is collared by $A$ for $s$ small, consider $Q_1 = \Phi(\LL_1 \times \RR_t)\bigcup A \times V$ where $V \subset (0,\infty)_s \times (0,\infty)_t$ is the complement of the image of $\Phi$. Via the inclusion $T^*(\RR_s \times A) \cong \RR_s \times J^1(A) \contains \RR_s \times U \subset M$, we consider $Q_1$ as a subset of $M \times T^*\RR_t$. Then the union $Q:= Q_1 \bigcup_{A \times \RR_t} \left((L \times \RR_t) \cap (M \times T^*\RR_t)^o\right)$  is a Lagrangian cobordism from $\LL$ to $L$. It is vertically bounded (i.e., the cotangent coordinate in the $\RR_t$ direction is bounded), so is an invertible morphism in $\lag_\Lambda(M)$. (This is the same reasoning as in Remark~\ref{remark. Ham isotopies are equivalences away from Lambda}.)
		
		Finally, note that one can as usual deform this Lagrangian in $M \times T^*F$ so its primitive vanishes near infinity, which (because the cobordism is vertically bounded) is where $|s| >>0$. On the other hand, we already know that $\LL_1$ was equipped with a vanishing primitive in this region, as did $\del_\infty L$, so we see that the resulting deformed Lagrangian is indeed still collared by $\LL$ and by $L$, as desired. 
\end{proof}

        \begin{figure}[h]
        		\[
        			\xy
        			\xyimport(8,8)(0,0){\includegraphics[width=4in]{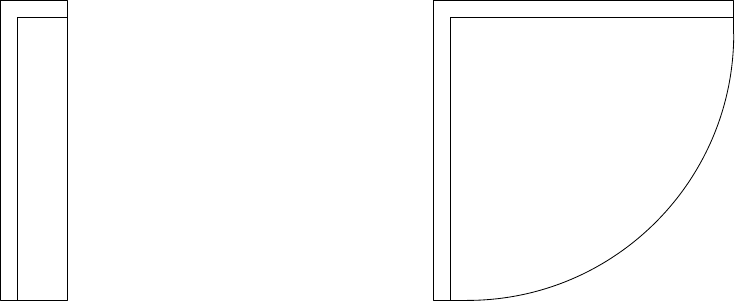}}
					,(-1,-0.5)*+{(s=0, t=0)}
					,(-1,8)*+{(s=S, t=0)}
					,(1.8,-0.5)*+{(s=0, t=T)}
					,(1.8,8)*+{(s=S, t=T)}
					,(6.3,5)*+{\phi( (0,S) \times (0,T) )}
        			\endxy
        		\]
        \caption{
        On the left, an open rectangle $(0,S)_s \times (0,T)_t$. On the right, the image of the open embedding $\phi$ taking the open rectangle to a quarter-disk-shaped region in $(0,S)_s \times (0,S)_t$. Note that the collarings near $s=S$ and $t=0$ are respected. 
        }
        \label{figure. phi}
        \end{figure}

\section{The proof of Theorem~\ref{theorem. cocores}}\label{section. proof generation}
	We follow the notation of~\cite{gps2}.

	Let $(M, \ff)$ be a Weinstein manifold with a stop $\ff = \fcrit \cup \fsubcrit \subset \del_\infty M$, where $\fcrit \subset \del_\infty M$ is a Legendrian.

	Let $Y = (\RR^2, \{\pm \infty\})$ denote $\RR^2$ with the radial Liouville structure, and two non-origin points chosen as stops, which we will call $\pm \infty$. This stopped Liouville manifold is equivalent to $T^*\RR$ with its usual cotangent Liouville structure. Up to isotopy, there is a unique Lagrangian linking disk in $Y$, which under this correspondence one can model as the cotangent fiber of $T^*\RR$.

	Then $M \times Y$, with the product stop, has the following property:

\begin{lemma}[\cite{gps2}.]\label{lemma. products of cocores}
		The product of a linking disk in $M$ and a linking disk in $Y$ is a linking disk in $M \times Y$, and the product of a cocore in $M$ with a linking disk in $Y$ is a linking disk in $M \times Y$. 
\end{lemma}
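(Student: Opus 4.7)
The plan is to verify both statements by writing the product Lagrangians in local Weinstein normal form near the relevant boundary components and matching them against the standard model of a linking disk. Recall that a linking disk at a critical component $c$ of the Legendrian stop is, after choosing a Darboux--Weinstein neighborhood of $c$, a Lagrangian half-disk meridian to $c$ inside the jet-bundle model $J^1(c)$, which propagates inward along the Liouville flow to meet the skeleton at a single point. A cocore at an index-$n$ critical handle is likewise modeled, in standard $(\RR^{2n},\lambda_{std})$ handle coordinates, by the Lagrangian conormal to the core, and one may identify it symplectomorphically with a cotangent fiber to the descending disk.

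For the first claim, fix critical components $p \subset \ff$ and $q \subset \fg$, with linking disks $D_p \subset M$ and $D_q \subset Y$. The product $D_p \times D_q$ is Lagrangian and exact, and meets $\core(M) \times \core(Y) = \core(M \times Y)$ at the single transverse point $(D_p \cap \core(M)) \times (D_q \cap \core(Y))$. Near $p \times q$ in $\del_\infty(M \times Y)$, Construction~\ref{construction.product stops} together with the accompanying smoothing remark produces a contact chart in which the smoothed product stop $\ff \times \fg \times \RR$ is modeled as a coordinate Legendrian, and in the same chart the product $D_p \times D_q$ appears as the product of two half-disk meridians. A direct check verifies that this product is Hamiltonian isotopic, through eventually conical Lagrangians disjoint from the rest of the stop, to a meridian half-disk of the smoothed Legendrian $\ff \times \fg \times \RR$; that is exactly the definition of a linking disk at this component.

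For the second claim, replace $D_p$ by a cocore $C$ at a critical point $p_C$ of the Weinstein Morse function on $M$. The standard Weinstein handle model identifies $C$, inside a neighborhood of the contact boundary of the sub-Liouville domain right before attachment of $p_C$, with a meridian disk to the attaching Legendrian sphere $\Lambda_C = \del_\infty C$. Taking the product with $Y$ and smoothing the corner of the resulting Liouville domain produces a contact chart in which $\Lambda_C \times q \times \RR$ appears as a Legendrian piece of the smoothed product stop, and $C \times D_q$ appears as a meridian disk to it. The same chart computation as in the first case then identifies $C \times D_q$ as a linking disk for this smoothed corner component.

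The principal technical obstacle, and the only nontrivial step, is the corner-smoothing computation: one must carry out the smoothing of $\del M^o \times \del Y^o \subset \del(M^o \times Y^o)$ in a standard jet-bundle chart in such a way that the product of two meridian half-disks is carried to a meridian half-disk of the smoothed Legendrian $\ff \times \fg \times \RR$. This is a concrete local exercise in the product Liouville form, and once it is in hand both statements of the lemma reduce to it together with the elementary identity $\core(M \times Y) = \core(M) \times \core(Y)$.
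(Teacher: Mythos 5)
Your strategy---identifying both products with the standard meridian model of a linking disk via local normal forms---is reasonable in spirit, and indeed the paper itself gives no independent argument (its proof is the citation to Section~7.2 of \cite{gps2}, where essentially this kind of local identification is carried out). However, two of your geometric identifications are incorrect, and the second one breaks your proof of the second claim. First: a linking disk $D_p$ at a point $p \in \fcrit$ does \emph{not} meet $\core(M)$. It is a small meridian disk in the conical end, transverse to the Liouville cone $\bigcup_t \flow^Z_t(\fcrit)$, and it is disjoint from the core itself (points of $\core(M)$ never escape to infinity, whereas $D_p$ lives entirely in the end). So your ``single transverse point $(D_p \cap \core(M)) \times (D_q \cap \core(Y))$'' is an intersection with the empty set. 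The statement you actually need for the first claim is that $D_p \times D_q$ meets the cone over the corner stratum $\fcrit \times \fg^{crit} \times \RR$ of the product stop exactly once, transversally; with that correction, your corner-smoothing computation is aimed at the right stratum and the first claim is repairable along the lines you describe.

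The second claim is where the proposal genuinely fails. You assert that $\Lambda_C \times q \times \RR$, with $\Lambda_C = \del_\infty C$ the ideal boundary of the cocore, ``appears as a Legendrian piece of the smoothed product stop.'' It does not: by Construction~\ref{construction.product stops} the product stop is $(\ff \times \core(Y)) \cup (\core(M) \times \fg) \cup (\ff \times \fg \times \RR)$, and $\del_\infty C$ is not contained in $\ff$ (cocores avoid the stop at infinity), so no stratum of the form $\del_\infty C \times q \times \RR$ exists. The stratum that $C \times D_q$ actually links is $\core(M) \times \fg^{crit}$: the cocore passes through its critical point $x_0 \in \core(M)$, the disk $D_q$ meets the ray over $q$ at a point $y_q$, and $(x_0, y_q)$ flows under the product Liouville flow to the ideal point $(x_0, q) \in \core(M) \times \fg$. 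In particular this case is not a corner phenomenon at all---the linking happens over the \emph{interior} of $M$---and the local model required is (cotangent fiber) $\times$ (meridian to the cone over $q$) inside a Weinstein chart at $x_0$ times the end of $Y$, not a smoothing of $\del M^o \times \del Y^o$. Consequently your closing reduction, that both statements follow from the single corner-smoothing exercise together with $\core(M \times Y) = \core(M) \times \core(Y)$, is false for the second statement; that case needs its own, different, local computation.
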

		
\begin{proof}
		See Section 7.2 of~\cite{gps2}.
\end{proof}

	Now we let $\ff$ be a stop and let $\Lambda = \core(M) \bigcup \bigcup_{t \in \RR} \flow^Z_t(\ff)$. 

	Note that because a linking disk in $Y$ is equivalent to a cotangent fiber of $T^*\RR$, the product of a Lagrangian $X$ in $M$ with a linking disk in $Y$ is the stabilization of $X$---in particular, the equivalence class of $X$ in $\lag_{\Lambda}(M)$ is unchanged.

	Given an object $X \in \lag_{\Lambda}(M)$, let $X^{\dd} = X \times \RR_y$ be the stabilization. There is an eventually linear, positive Hamiltonian isotopy that isotopes $\RR_y$ to a (non-compact) arc with $y$-coordinates strictly less than 0. An induced, eventually linear isotopy on $M \times Y$ moves $X^{\dd}$ to a brane whose $y$-coordinates are strictly less than zero. This mean that this brane is a zero object in $\lag(M)$. (Proposition~\ref{prop. zero objects}.)

	By genericity, one can arrange that this isotopy is modeled by passing through Lagrangian linking disks of $M \times Y$, one point at a time, and transversally. (Lemmas 2.2 and 2.3 of~\cite{gps2}.) These positive isotopies at infinity can be realized by eventually linear Hamiltonians by Example~\ref{example. vertical Hamiltonians from positive flows}. 

	To set notation, let us time-order the intersection points $x_1, \ldots, x_n$, so $x_i \in \flow_{t_i}(X^{\dd}) \cap D_i$ where $D_i$ is some Lagrangian linking disk, and $0 < t_1 < \ldots < t_n$. By Section~4 of~\cite{tanaka-surgery}, there exists a Polterovich surgery $D_i \sharp_{x_i} \flow_{t_i}(X^{\dd})$ equipped with a brane structure restricting to that of $X^{\dd}$ away from $x_i \cup D_i$. 

\begin{lemma}\label{lemma. surgery by linking disk}
		The surgery $D_i \sharp_{x_i} \flow_{t_i}(X^{\dd})$ is equivalent to $\flow_{t_i+\epsilon}(X^{\dd})$ in $\lag_\Lambda(M)$.
\end{lemma}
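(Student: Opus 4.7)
The plan is to exhibit $D_i \sharp_{x_i} \flow_{t_i}(X^{\dd})$ as $\flow_{t_i+\epsilon}(X^{\dd})$ with an exact embedded Lagrangian handle attached along the conical end of $D_i$, and then to invoke Lemma~\ref{lemma.attachment}.

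First, I pass to the standard local model. In a Darboux ball around $x_i$ one can arrange $\flow_{t_i}(X^{\dd})$ and $D_i$ to appear as two transverse Lagrangian planes, and, by the genericity already invoked, the family $\{\flow_t(X^{\dd})\}_{t \in [t_i - \epsilon', t_i + \epsilon']}$ as a standard rotation sweeping the first plane across the second. In this local picture, the Polterovich surgery model of Section~4 of~\cite{tanaka-surgery} expresses $D_i \sharp_{x_i} \flow_{t_i}(X^{\dd})$---up to a compactly supported Hamiltonian isotopy in the Darboux ball---as $\flow_{t_i+\epsilon}(X^{\dd})$ with a small finger growing out of it and running alongside a short arc of $D_i$.

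Next, I slide this finger outward along $D_i$ toward infinity. Since $D_i$ is a linking disk for a component of $\ff$, a small tubular neighborhood of $D_i$ meets $\Lambda$ only in the prescribed way at $\del_\infty D_i$; the slide can therefore be realized by an eventually linear Hamiltonian isotopy avoiding the stop $\ff$ (Example~\ref{example. vertical Hamiltonians from positive flows}), which by Remark~\ref{remark. Ham isotopies are equivalences away from Lambda} produces an equivalence in $\lag_\Lambda(M)$. In the limit where the finger reaches $\del_\infty D_i$, the resulting Lagrangian is precisely $\flow_{t_i+\epsilon}(X^{\dd})$ with an exact embedded Lagrangian handle of the type described in Section~\ref{section. handles}, attached inside a Darboux-Weinstein neighborhood of $\del_\infty D_i$. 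Lemma~\ref{lemma.attachment} then removes this handle up to equivalence, yielding $D_i \sharp_{x_i} \flow_{t_i}(X^{\dd}) \simeq \flow_{t_i+\epsilon}(X^{\dd})$.

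The main obstacle is the bookkeeping of primitives, gradings, and relative Pin structures: one must verify that the brane structure produced by Theorem~\ref{theorem. surgery cone} is the one induced by the sliding-and-attaching procedure, and that the primitive of the resulting handle genuinely vanishes at the newly created Legendrian boundary, as required for the handle to be \emph{exact} in the sense of Section~\ref{section. handles}. This is technical but follows from the explicit local nature of the Polterovich construction of~\cite{tanaka-surgery} together with the fact that the finger-sliding isotopy is compactly supported in a neighborhood of $D_i$ disjoint from $\Lambda$.
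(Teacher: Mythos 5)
Your proposal has the same architecture as the paper's proof---identify the Polterovich surgery with a wrapped Lagrangian carrying an exact embedded handle at infinity, then delete the handle via Lemma~\ref{lemma.attachment}---but the geometric step that is supposed to produce this identification is false as stated. The surgery $D_i \sharp_{x_i} \flow_{t_i}(X^{\dd})$ contains all of $D_i$ outside a small ball around $x_i$, in particular the entire noncompact conical end of $D_i$; its Legendrian boundary at infinity is the disjoint union $\del_\infty \flow_{t_i}(X^{\dd}) \sqcup \del_\infty D_i$. By contrast, $\flow_{t_i+\epsilon}(X^{\dd})$ is a globally wrapped copy of $X^{\dd}$, whose boundary at infinity is (per \cite{gps2}) the Legendrian surgery of $\del_\infty \flow_{t_i-\epsilon}(X^{\dd})$ with $\del_\infty D_i$ along the short Reeb chord near the stop. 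These two Lagrangians therefore differ both along the end of $D_i$ and near $\fcrit$ at infinity, so no isotopy compactly supported in a Darboux ball around $x_i$ (indeed no compactly supported isotopy at all) can realize your step-2 claim; your ``small finger along a short arc of $D_i$'' would in fact have to run the whole length of $D_i$ out to $\del_\infty D_i$. This is exactly why the paper does not localize near $x_i$ but instead invokes Remark~3.5 of \cite{gps2}: the point surgery agrees, up to compactly supported Hamiltonian isotopy, with the \emph{chord} surgery $\flow_{t_i-\epsilon}(X^{\dd}) \sharp_{\gamma_i} D_i$ (the \emph{non-exact} handle attached along $\gamma_i$ at infinity) together with one additional exact $(n-1)$-handle, giving the object $\XX_i$.

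There is also a more structural reason your localization cannot work: in the Darboux-ball model of two transverse planes swept by a rotation, nothing distinguishes $t_i+\epsilon$ from $t_i-\epsilon$, yet the lemma's content is precisely this asymmetry---the paper's own conclusion is a fiber sequence $D_i^{\alpha_i} \to \flow_{t_i+\epsilon}(X^{\dd}) \to \flow_{t_i-\epsilon}(X^{\dd})$, so the before- and after-Lagrangians differ by the cone on $D_i^{\alpha_i}$ and are in general inequivalent in $\lag_\Lambda(M)$. The symmetry is broken only by matching the handedness of the surgery against the direction of wrapping past the stop at infinity, which is the content of Proposition~1.27 of \cite{gps2}, the third equivalence in the paper's chain~\eqref{eqn:main-chain-of-equivalences}; your write-up assumes it silently at the moment you declare the slid-out object to be $\flow_{t_i+\epsilon}(X^{\dd})$ plus a handle, rather than $\flow_{t_i-\epsilon}(X^{\dd})$ plus a handle. (Your finger-slide ``in the limit'' is also not yet an argument, since the support of the isotopies escapes to infinity, but that is repairable; the paper avoids it by using well-chosen models so the comparison isotopy is compactly supported.) Finally, you identify brane-structure bookkeeping as the main obstacle, but the paper flags a different subtlety you have conflated: the chord handle is the \emph{non-exact} one (it produces an exact brane only after adjusting brane structures), and the exact $(n-1)$-handle that Lemma~\ref{lemma.attachment} removes is a separate, additional attachment. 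Treating these as one ``exact handle'' is what makes your argument look local when it cannot be.
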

		
\begin{proof}
		For this proof only, let $\gamma_i$ denote the Reeb chord realizing the intersection $x_i$, and let $\flow_{t_i-\epsilon}(X^{\dd}) \sharp_{\gamma_i} D_i$ denote the result of attaching a {\em non-exact} embedded Lagrangian 1-handle as in \cite{gps2}. (Note that the object with subscript $\sharp_{\gamma_i}$ is a different submanifold than the object with subscript $\sharp_{x_i}$. Also, this non-exact handle attachment {\em does} result in an exact Lagrangian brane, but after potentially changing the brane structures of its constituents.) Let us attach an {\em exact} embedded $(n-1)$-handle to $\flow_{t_i-\epsilon}(X^{\dd}) \sharp_{\gamma_i} D_i$, and call the result $\XX_i$. By Remark~3.5 of loc. cit., $\XX_i$ is Hamiltonian isotopic to the surgery $D_i \sharp_{x_i} \flow_{t_i}(X^{\dd})$; this isotopy can be chosen to be compactly supported by well-chosen models for the surgery and the handle attachment. Further, by Lemma~\ref{lemma.attachment}, $\XX_i$ and $\flow_{t_i-\epsilon}(X^{\dd}) \sharp_{\gamma_i} D_i$ are equivalent by a vertically bounded morphism.
		
		Moreover, by Proposition 1.27 of~\cite{gps2}, we know that there is an eventually linear Hamiltonian isotopy between $\flow_{t_i+\epsilon}(X^{\dd})$ and $\flow_{t_i-\epsilon}(X^{\dd}) \sharp_{\gamma_i} D_i$. (Indeed, by calibrating our choices of wrapping, one may choose this isotopy to be compactly supported.) Thus we have the equivalences
			\eqn\label{eqn:main-chain-of-equivalences}
			\xymatrix{
			D_i \sharp_{x_i} \flow_{t_i}(X^{\dd})
			\ar[r]^-{\sim}
			&\XX_i
			\ar[rr]_-{Lemma~\ref{lemma.attachment}}^-{\sim}			
			&&\flow_{t_i-\epsilon}(X^{\dd}) \sharp_{\gamma_i} D_i
			\ar[r]^-{\sim}
			&\flow_{t_i+\epsilon}(X^{\dd}).
			}
			\eqnd
\end{proof}

	By Theorem~\ref{theorem. surgery cone}, each $D_i \sharp_{x_i} \flow_{t_i}(X^{\dd})$ admits a fiber sequence $D_i^{\alpha_i} \to D_i \sharp_{x_i} \flow_{t_i}(X^{\dd}) \to \flow_{t_i}(X^{\dd})$, where $D_i^{\alpha_i}$ is our notation for the disk $D_i$, equipped with a well-chosen brane structure $\alpha$. So by Lemma~\ref{lemma. surgery by linking disk}, we obtain fiber sequences
		\eqnn
			D_i^{\alpha_i} \to \flow_{t_i + \epsilon}(X^{\dd}) \to \flow_{t_i - \epsilon}(X^{\dd}),
			\qquad
			i = 1, \ldots, m.
		\eqnd

	Moreover, if the flow of $X^{\dd}$ does not pass through any linking disk in the time interval $[s,t]$,  then $\flow_s(X^{\dd})$ and $\flow_t(X^{\dd})$ are equivalent objects in $\lag(M)$ by Remark~\ref{remark. Ham isotopies are equivalences away from Lambda}.

	Because we know $\flow_0(X^{\dd}) = X^{\dd}$ and $\flow_{t_n + \epsilon}(X^{\dd})$ is a zero object, the theorem is proven by Proposition~\ref{prop. generation and cones}. One simply puts $X_i$ to be any of the equivalent objects in~\eqref{eqn:main-chain-of-equivalences}.

\section{Proof of Theorem~\ref{thm:geometric-version}}

\begin{notation}
In what follows, we let $M^\dd = M \times Y$.
\end{notation}

We fix times $t_i$ and objects $D_i^{\alpha_i}$ as in the proof of Theorem~\ref{theorem. cocores}. We note that the equivalence of Lemma~\ref{lemma.attachment} is realized by a vertically bounded morphism, while the other isotopies in the sequence~\eqref{eqn:main-chain-of-equivalences} can be chosen to be compactly supported. As such, the entire composition of equivalences in~\eqref{eqn:main-chain-of-equivalences} can be realized by a vertically bounded Lagrangian cobordism.

\begin{remark}
Let us briefly remark on the importance of our Hamiltonians being compactly supported. Fix a $t$-dependent Hamiltonian $H: M^\dd \times \RR_t \to \RR$. Then the resulting Lagrangian cobordism has the feature that at a given $(\flow_t(x),t) \in M^\dd \times \RR_t$, the cotangent fiber coordinate is proportional to $H_t(\flow_t(x))$. In particular, if $H_t$ ever approaches an arbitrary large value, we lose control over the collaring of the resulting cobordism. Maintaining this control is important, as in the statement of Theorem~\ref{thm:geometric-version}, we have strong restrictions on the collaring.
\end{remark}

As a result, each exact sequence
	\eqnn
	 D_i^{\alpha_i} \to X_i \to X_{i-1}
	 \eqnd
is realized by a Lagrangian cobordism $Q_i$ associated to surgery, possibly with vertically bounded Lagarngian cobordisms concatenated along the ends; in particular, the projection of $Q_i$ to $T^*F$ is collared by three rays---the incoming ray along the zero section $F \subset T^*F$ is collared by $D_i^{\alpha_i}$, while the outgoing ray is collared by $X_i$, and there is a vertical ray collared by $X_{i-1}$. Setting $A_1 = Q_1$ and proceeding by induction, we may glue $A_{i-1}$ along $X_{i-1}$ to obtain a new Lagrangian cobordism $A_i$. After the final step, we obtain a Lagrangian $A_n \subset M^\dd \times T^*F$ whose projection to $T^*F$ is equal to a finite number of rays outside of a compact subset of $T^*F$. Along these rays, $A_n$ is collared by $X_0 = X$, by the objects $D_i^{\alpha_i}$, and also by a zero object $X_n$.  This completes the proof.

\section{The symmetric monoidal structure}

\subsection{The definition}

We now define a functor $\tensor: \ho\Lag_{pt}(pt) \times  \ho\Lag_{pt}(pt) \to  \ho\Lag_{pt}(pt)$.

Recall that any $\infty$-category $\cC$ defines a category (in the usual sense, for instance, of MacLane~\cite{mac-lane}) $\ho\cC$ called its homotopy category. Its objects are the same as that of $\cC$, and the set of morphisms $\hom_{\ho\cC}(L,L')$ is given by $\pi_0\hom_\cC(L,L')$.

Fix two objects $L \subset T^*E^n$ and $L' \subset T^*E^{n'}$. As mentioned in \cite{tanaka-pairing, tanaka-exact}, the product of these need not be eventually conical. But assuming that their primitives vanish away from a compact set, one can always choose a deformation $L \tensor L'$ of $L \times L'$ which {\em is} eventually conical. (See for example \cite{tanaka-exact} or Section~6.2 of~\cite{gps2}.) We will assume we have made such a choice $L \tensor L'$ for every ordered pair $(L, L')$, and define $\tensor$ on objects as $(L,L') \mapsto L \tensor L'$. Note that any two such choices are Hamiltonian isotopic by an eventually linear Hamiltonian.

Fix two morphisms $P: L_0 \to L_1$ and $P': L'_0 \to L'_1$ in $\lag_{pt}(pt)$. We let $[P],[P']$ denote their morphisms in $\ho\Lag_{pt}(pt)$, and we define 
	\eqnn
	[P] \tensor [P'] := [(P \times \id_{L'_1}) \circ (\id_{L_0} \times P')] \in \pi_0 \hom_{\ho\Lag_{pt}(pt)}(L_0 \tensor L_0', L_1 \tensor L_1')
	\eqnd
	 to be the equivalence class of a chosen, eventually conical deformation of the composed cobordism $(P \times \id_{L'_1}) \circ (\id_{L_0} \times P')$. We choose this deformation so that the domain and codomain of this cobordism are indeed the objects $L_0 \tensor L_0'$ and $L_1 \tensor L_1'$, respectively.

\begin{remark}
Given $P$ and $P'$, note that $P \times P'$ most naturally lives over a rectangle, not over a line. (See Figure~\ref{figure.P-times-Q}.) The natural guess to extract a 1-morphism (i.e., something living over a line) would be to take a cobordism living along the diagonal of this rectangle, but the resulting submanifold may be singular. 

\begin{figure}
	\[
	\xymatrix{
	L_0 \times L'_1 \ar[rr]^{P \times \id_{L'_1}}
	&& L_1 \times L'_1 \\ 
	&P \times P'&\\
	L_0 \times L'_0 \ar[uu]^{\id_{L_0} \times P'} \ar[rr]_{P \times \id_{L'_0}}
	&& L_1 \times L'_0 \ar[uu]_{\id_{L_1} \times P'}
	}
	\]
\caption{The Lagrangian $P \times P' \subset T^*E^n \times T^*E^{n'} \times T^*F^2$. We have projected everything to $F^2$.}\label{figure.P-times-Q}
\end{figure}

What instead $P \times P'$ encodes is a higher cobordism (hence, in $\lag_{pt}(pt)$, a homotopy) between two natural morphisms:
	\[
	(P \times \id_{L'_1}) \circ (\id_{L_0} \times P')
	\qquad\text{and}\qquad
	(\id_{L_1} \times P') \circ (P \times \id_{L'_0})
	\]
which we can read off of the rectangle by considering the cobordisms collaring the boundary edges of the rectangle. Indeed, resolving the corners of the rectangle, we obtain a 2-morphism (a homotopy) showing that the two natural morphisms are homotopic in $\lag_{pt}(pt)$. Thus, either of these compositions is equally deserving of the title ``$P \tensor P'$.'' Indeed, it follows that $[(P \times \id_{L'_1}) \circ (\id_{L_0} \times P')] = [(\id_{L_1} \times P') \circ (P \times \id_{L'_0})]$ in $\ho\Lag$.

A similar argument shows that if $[P] = [Q]$ and $[P'] = [Q']$ in $\hom_{\ho\Lag}$, then $[P] \tensor [P'] = [Q] \tensor [Q']$. 
\end{remark}

\begin{remark}
The reader can anticipate a host of coherence questions that would arise by taking products of higher morphisms; and hence we hope we have conveyed a glimpse of the intricacies of defining the symmetric monoidal structure at the $\infty$-categorical level of $\lag$ (as opposed to the categorical level of $\ho\lag$). 
 \end{remark}

The following is now straightforward:

\begin{prop}
$\tensor: \ho\Lag_{pt}(pt) \times  \ho\Lag_{pt}(pt) \to  \ho\Lag_{pt}(pt)$ is a functor.
\end{prop}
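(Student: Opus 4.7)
The plan is to verify the two remaining functor axioms, preservation of identities and preservation of composition, since the preceding remarks already establish that $[P]\tensor[P']$ depends only on the homotopy classes $[P]$ and $[P']$ and is independent of the chosen eventually conical deformation of $P\times \id_{L'_1}\cup \id_{L_0}\times P'$.

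For identities, the identity morphism $\id_L$ is represented by $L\times F\subset M\times T^*E^n\times T^*F$. By definition
$$
[\id_L]\tensor[\id_{L'}] \;=\; [(L\times F\times \id_{L'})\circ(\id_L\times L'\times F)],
$$
and the underlying Lagrangian is (after the chosen eventually conical deformation) of the form $(L\tensor L')\times F$, i.e., the identity of $L\tensor L'$. Since any two eventually conical deformations of $L\times L'$ differ by an eventually linear Hamiltonian isotopy, this representative is equivalent in $\lag_{pt}(pt)$ to $\id_{L\tensor L'}$, so $[\id_L]\tensor[\id_{L'}]=[\id_{L\tensor L'}]$.

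For composition, fix $P:L_0\to L_1$, $Q:L_1\to L_2$, $P':L'_0\to L'_1$, $Q':L'_1\to L'_2$. Expanding the definitions and using that $(\alpha\circ\beta)\times\id$ and $\id\times(\alpha\circ\beta)$ split on the nose, both composites
$$
([Q]\circ[P])\tensor([Q']\circ[P']) \quad\text{and}\quad ([Q]\tensor[Q'])\circ([P]\tensor[P'])
$$
are equivalence classes of concatenations of four cobordisms, each of the form $(\text{something})\times \id$ or $\id\times(\text{something})$. Comparing term by term, the two expressions differ only in the middle two factors, where one has $(P\times\id_{L'_2})\circ(\id_{L_0}\times Q')$ and the other has $(\id_{L_1}\times Q')\circ(P\times \id_{L'_1})$. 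Showing
$$
(P\times\id_{L'_2})\circ(\id_{L_0}\times Q')\;\sim\;(\id_{L_1}\times Q')\circ(P\times\id_{L'_1})
$$
in $\ho\Lag_{pt}(pt)$ is precisely the interchange relation already observed in the remark preceding the proposition: the product cobordism $P\times Q'\subset T^*E^n\times T^*E^{n'}\times T^*F^2$, viewed over the rectangle in $F^2$ of Figure~\ref{figure.P-times-Q}, witnesses a higher cobordism (hence a homotopy, hence an equality in $\pi_0$) between the two boundary compositions. Substituting this equality into the middle of the four-fold composition above gives the desired equality in $\ho\Lag_{pt}(pt)$.

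The main obstacle is the usual one encountered in the definition of $\tensor$ itself: the products $P\times\id_{L'_j}$, $\id_{L_i}\times P'$, and $P\times Q'$ are not eventually conical on the nose, and their concatenations have corners along the codimension-two loci where the collar regimes meet. As in the definition of $L\tensor L'$ on objects, I would remove these obstructions by an eventually conical deformation carried out near the corners, observing that any two such deformations are related by eventually linear Hamiltonian isotopies and therefore represent the same class in $\ho\Lag_{pt}(pt)$. Once the deformations are in place, the rectangle homotopy above goes through verbatim and functoriality follows.
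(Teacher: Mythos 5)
Your proposal is correct and takes essentially the same route the paper intends: the paper omits the proof as ``straightforward,'' and what it leaves implicit is exactly your argument---identities follow from $\id_L = L\times F$ together with uniqueness of eventually conical deformations up to eventually linear Hamiltonian isotopy, and composition follows by splitting both sides into four product cobordisms and applying the interchange homotopy furnished by the rectangle $P\times Q'$ of Figure~\ref{figure.P-times-Q}. No gaps; your treatment of the corner/conicality issue matches the paper's conventions.
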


\subsection{The swap map}\label{section.swap-map}
Any symmetric monoidal category comes equipped with a natural isomorphism
	\[
	s: L \tensor L' \cong L' \tensor L.
	\]
We define such a swap map for $\ho\lag_{pt}(pt)$.

\begin{remark}
Recall that given any time-dependent vector field $V_t$ on a smooth manifold $Q$, one has an induced Hamiltonian flow via the function
	\[
	H_t: T^*Q \to \RR,
	\qquad
	\alpha \mapsto \alpha(V_t).
	\]
The flow of $L_{H_t}$ and the flow of $V_t$ make the following diagram commute:
	\[
	\xymatrix{
	T^*Q \times \RR \ar[rr]^{\flow_{L_{H_t}}}  \ar[d]^{\pi \times \id_\RR} && T^*Q \ar[d]^{\pi} \\
	Q \times \RR \ar[rr]^{\flow_{V_t}}  && Q.
	}
	\]
Here, $\pi$ is the projection map from the cotangent bundle to the zero section. Note also that spin structures and gradings are preserved by Hamiltonian isotopies of a cotangent bundle induced by flows on the zero section.
\end{remark}

Let $L$ and $L'$ be branes in $T^*\RR^n$ and $T^*\RR^{n'}$, respectively. Stabilizing if necessary, we may assume $n=n'$. We define a morphism
	\[
	s: L \times L' \simeq L' \times L.
	\]
	
\enum
	\item 
By assumption, the reverse Liouville flow of $L \times L'$ is contained in some compact subset $K \subset \RR^n \times \RR^{n'}$.
Choose some vector $(v,v') \in \RR^n \times \RR^{n'}$ so that the translate
		\[
		K + (v,v')
		\]
	does not intersect the origin of $\RR^n \times \RR^{n'}$. Consider the object $(L \times L')^{\dd n} \subset (T^*\RR^n)^3$. Translation by $(v,v',0)$ defines a Hamiltonian isotopy of $(T^*\RR^n)^3$, which leaves the $\dd n$ coordinate unaffected. Perform this translation on $(L \times L')^{\dd n}$.
	\item Now consider the vector field
		\[
		\del_\theta : =
		\psi(r) \sum_{1 \leq i \leq n} 
		{\frac 
		{x_i \del_i - x_{n+i} \del_{n+i}}
		{x_i^2 + x_{n+i}^2}}
		\]
where $r: \RR^{3n} \to \RR$ is the distance from the origin, and $\psi(r)$ is a bump function which equals $1$ on $r(K)$, and zero outside a small neighborhood of $r(K)$. We rotate by 90 degrees via this vector field. This has the effect of fixing the ${\dd n}$ coordinate, but rotating the coordinates $x_i$ into $x_{n+i}$. Applying the resulting Hamiltonian vector field on $T^*\RR^{3n}$, one obtains a Lagrangian which is equal to $(\ov{L'} \times L)^{\dd n}$, translated. Note that $x_{n+i}$ is mapped to $-x_i$, hence the overline on $L'$.
	\item Now we likewise consider a vector field which rotates the $n'$ coordinate into the ${\dd n}$ coordinate, and apply a rotation of 180 degrees. The resulting Hamiltonian sends the Lagrangian from the previous step to one equalling a translate of $(L' \times L)^{\dd n}$.
	\item Translating back, we obtain the equivalence $(L \times L')^{\dd n} \to (L' \times L)^{\dd n}$.
\enumd

The only choices involved here were $(v,v')$, and the bump function $\psi$. The space of choices of $\psi$ is obviously contractible. Moreover, as $n, n' \to \infty$, the choice of vectors $(v,v') \in \RR^{n + n'} \setminus K$ is also contractible. Since the rotations and translations are determined completely by these two choices, we see that the map $s$ is well-defined up to contractible choice.

\begin{remark}\label{remark.s-s}
It is an easy exercise to perform $s \circ s$ and see that the resulting Lagrangian is equivalent to $L \times L'$, and that moreover $s \circ s$, being a result of Hamiltonian isotopies induced by vector fields on $\RR^N$, is homotopic to the identity cobordism.
\end{remark}

\begin{remark}
Naturality of $s$ also follows easily from the fact that $s$ is constructed out of eventually linear Hamiltonian isotopies.
\end{remark}

\subsection{The proof of Theorem~\ref{theorem. pt action}}

We first show that $\tensor$ and the swap map induce a symmetric monoidal structure
	\[
	\ho\lag_{pt}(pt) \times \ho\lag_{pt}(pt) \to \ho\lag_{pt}(pt)
	\]
on the homotopy category of $\lag_{pt}(pt)$; what remains is to construct associators and verify compatibilities.

The natural associators 
	\[
	\alpha: (L \tensor L') \tensor L'' \cong L \tensor (L' \tensor L'')
	\]
and natural isomorphism
	\[
	\rho: L \tensor 1 \cong L
	\]
are given by the obvious morphisms, using that the Cartesian product $L \times L' \times L''$ is associative (up to natural bijections of sets). The other natural isomorphism
	\[
	\lambda: 1 \tensor L\cong L,
	\]
is given by the swap map from Section~\ref{section.swap-map} showing $1 \tensor L \cong L \tensor 1$.

And in general, we take the braiding $s$ to be the swap map defined in Section~\ref{section.swap-map}. We must show that the following diagrams commute:
	\[
	\xymatrix{
	L \tensor 1 \ar[rr]^{s} \ar[dr]^{\rho} && 1 \tensor L \ar[dl]^{\lambda} \\
	&L &
	}
	\]
	\[
	\xymatrix{
	(L \tensor L') \tensor L'' \ar[r]^{s \tensor \id} \ar[d]^{\alpha} & (L' \tensor L) \tensor L'' \ar[d]^{\alpha} \\
	L \tensor (L' \tensor L'') \ar[d]^{s} & L' \tensor (L \tensor L'') \ar[d]^{\id \tensor s} \\
	(L' \tensor L'') \tensor L \ar[r]^{\alpha} 
	& L' \tensor (L'' \tensor L)
	}
	\]
	\[
	\xymatrix{
	 & L' \tensor L \ar[dr]^{s} \\
	 L \tensor L' \ar[ur]^{s} \ar[rr]^{\id}
	 && L \tensor L'.
	}
	\]
The bottom diagram commutes by Remark~\ref{remark.s-s}, and by definition, the top diagram is a special case of the bottom diagram. So only the middle diagram requires commentary. Without loss of generality, assume $L, L', L''$ are all submanifolds of $T^*\RR^n$ for the same $n$. Then the left vertical column is the swap map applied to $L^{\dd n}$ and $L' \times L''$. This is induced by the rotating vector field sending $x_i$ to $x_{2n+i}$ in $\RR^{4n}$.

The righthand column (together with the top horizontal arrow) is in contrast induced by two rotation maps on $\RR^{3n}$. First, the swap rotation sending $x_i$ to $x_{n+i}$, then the rotation map sending $x_{n+i}$ to $x_{2n+i}$. The end effect is a rotation map sending $x_i$ to $x_{2n+i}$, just as above. Moreover, $(L' \times L'' \times L)^{\dd n}$ is equivalent exactly to $(L' \times L'') \times L^{\dd n}$. Hence the middle diagram commutes as well.

So $\tensor$ lifts to a symmetric monoidal functor.

The bimodule action $\ho\lag_{pt}(pt) \times \ho\lag_{\Lambda}(M) \to \ho\lag_{\Lambda}(M)$,  $\ho\lag_{\Lambda}(M)\times \ho\lag_{pt}(pt) \to \ho\lag_{\Lambda}(M)$ is defined similarly, sending
	\eqnn
		(L,L') \mapsto L' \tensor L, \qquad (L',L) \mapsto L' \tensor L
	\eqnd
for $L \subset T^*E^n, L' \subset M \times T^*E^{n'}$. The compatibilities to be verified are analogous to the earlier bits of this section, so we do not repeat them.

So to finish the proof of the Theorem, we need only verify that mapping cones in each variable are sent to mapping cones. This is obvious, as given $P: L_0 \to L_1$, and an object $L'$, then $P \tensor \id_{L'} \simeq P \times L'$ as a manifold (up to Hamiltonian deformation), hence $[P \tensor \id_{L'}] = [P \times L']$, and the mapping cone of $P \times L'$ is geometrically constructed as $\cone(P) \times L'$. (See Construction~\ref{construction. cone}.) This completes the proof.

	\bibliographystyle{amsalpha}
	\bibliography{biblio}

\end{document}